\documentclass{amsart}

\usepackage{amssymb}
\usepackage{amsmath}
\usepackage{amsthm}
\usepackage{verbatim}
\usepackage{diagbox}
\usepackage{thmtools}
\usepackage{hyperref}
\usepackage[shortlabels]{enumitem}
\usepackage{todonotes}
\usepackage{orcidlink}

\usepackage{tikz}
\usetikzlibrary{graphs}

\usepackage[backend=bibtex,maxbibnames=99]{biblatex}
\newcommand{\ctb}{{\aleph_0}}

\let\epsilon\varepsilon
\newcommand{\vp}{\varphi}
\let\phi\varphi

\newcommand{\mc}{\mathcal}
\newcommand{\sbeq}{\subseteq}

\newcommand{\sat}{\vDash}
\newcommand{\bsl}{\backslash}

\newcommand{\set}[1]{\left\{#1\right\}}

\newcommand{\ang}[1]{\left\langle#1\right\rangle}
\newcommand{\restr}{\restriction}

\let\bar\overline 
\let\0\varnothing 
\let\emptyset\varnothing 
\let\tilde\widetilde 

\DeclareMathOperator{\Mod}{Mod}

\DeclareMathOperator*{\bigdoublevee}{\bigvee\mkern-15mu\bigvee}

\newcommand{\Pibz}{\mathbf{\Pi}^0}
\newcommand{\Sibz}{\mathbf{\Sigma}^0}

\theoremstyle{plain}
\newtheorem{thm}{Theorem}[section]
\newtheorem{cor}[thm]{Corollary}
\newtheorem{prop}[thm]{Proposition}

\newtheorem*{cor:pseudofin}{\autoref{pseudofin}}
\newtheorem*{thm:bddUnbdd}{\autoref{bddUnbdd}}

\theoremstyle{definition}
\newtheorem{defn}[thm]{Definition}
\newtheorem{notn}[thm]{Notation}
\newtheorem{rmk}[thm]{Remark}
\newtheorem{quest}[thm]{Question}
\newtheorem{obs}[thm]{Observation}

\theoremstyle{remark}

\title{A Complete Bounded Theory with Unbounded Types}
\author[Hongyu Zhu]{Hongyu Zhu \orcidlink{0000-0002-2522-6319}}
\address{Department of Mathematics\\ University of Wisconsin--Madison \\
  480 Lincoln Drive, Madison, Wisconsin 53706-1325\\ USA}
\email{\href{mailto:hongyu@math.wisc.edu}{hongyu@math.wisc.edu}}
\urladdr{\url{https://people.math.wisc.edu/~hzhu322/}}
\subjclass[2020]{03C65, 03C10, 03E15} 
\keywords{Ehrenfeucht-Fra\"iss\'e games, bounded axiom\-atizability, elementary equivalence, trees, types.}
\date{\today}

\begin{document}
\begin{abstract}
  One measure of the complexity of a first-order theory, and similarly a type, is the complexity of the formulas required to axiom\-atize it. We say a theory is bounded if there is an axiom\-atization involving only $\forall_n$-formulas for some finite $n$, and unbounded otherwise. One might expect bounded theories to have only bounded types. In fact, an analogue holds in infinitary logic, where the complexity of a Scott sentence roughly agrees with the complexity of the most complicated automorphism orbit. Our main result, however, shows this is not the case in the first-order setting: Namely, there can be a bounded theory, in fact $\forall_1$-axiom\-atizable, which has unbounded types. 
\end{abstract}
\maketitle

\section{Introduction}
There have been many ways to measure the complexity of a theory. To begin with, finite axiom\-atizability has been studied since the very early days of logic: For example, the contrasting results that ZFC is not finitely axiom\-atizable \cite{Montague1964-MONSCA-7} and that NBG (von Neumann–Bernays–G\"odel set theory) is \cite{Godel1940-GDETCO} remain one of the greatest distinctions between the two versions of set theory. One weakening of this notion is recursive axiom\-atizability, which is about the computational complexity of the set of formulas in the axiom\-atization and is featured in G\"odel's Incompleteness Theorems \cite{godel_uber_1931}. The notion of complexity we will use is another weakening of finite axiom\-atizability, looking at the syntactical complexity of individual formulas instead:

\begin{defn}
Say a theory is \emph{boundedly axiom\-atizable} (or \emph{bounded} for short) if it is $\forall_n$-axiom\-atizable (see \autoref{forallFml}) for some finite $n$. Otherwise, it is not boundedly axiom\-atizable (or \emph{unbounded} for short).
\end{defn}

There has been recent interest in this notion. For example, Enayat and Visser \cite{EV} showed the incompleteness of any consistent bounded sequential theory in a finite language. In addition, it relates nicely with descriptive complexity:  Andrews, Gonzalez, Lempp, Rossegger, and Zhu \cite{AGLRZ} showed that given a theory $T$, its set of (countable) models (see \autoref{modT}) is $\Pibz_n$ if and only if $T$ is $\forall_n$-axiom\-atizable. Therefore, a theory is bounded if and only if its set of models is $\Pibz_n$ for some finite $n$. In addition, the authors showed in the same paper that if $T$ is complete (hence also if $T$ is a type), then it is unbounded if and only if its set of models is $\Pibz_\omega$-complete.

The main goal of this paper is to investigate the relationship between the boundedness of a theory and that of the types of the theory. Observe that unbounded theories always have unbounded types, since the theory is the type of the empty tuple. Therefore, one could conjecture that the converse holds as well, that bounded theories will only have bounded types. In a sense, this holds in infinitary logic: There, the analogue of a complete theory is a Scott sentence, which completely characterizes a countable structure; And the analogue of a (complete) type is an infinitary definition (without parameters) of an automorphism orbit. Montalb\'an \cite{montalban_robuster_2015} shows that for any structure $\mc A$, having a $\Pi^{\mathrm{in}}_{\alpha+1}$ Scott sentence is equivalent to all automorphism orbits being $\Sigma^{\mathrm{in}}_{\alpha}$-definable. Nevertheless, our main result shows that this is not the case in the first-order setting:

\begin{thm:bddUnbdd}
There is a complete theory $T$ which is bounded (in fact $\forall_1$-axiom\-atizable) but has unbounded types. In addition, it is strictly superstable.
\end{thm:bddUnbdd}

One major difficulty in proving this theorem is obtaining an unbounded type. Usually, such types show up either when the theory is already complicated to begin with (e.g. true arithmetic), or as Marker extensions which simultaneously increase the complexity of the theory and the types. In both cases, the underlying theories are unbounded, so we needed new machinery.

Notably, such a theory is far from being model complete, despite being $\forall_2$-axiom\-atizable: Any unbounded type must contain, for any $n$, formulas not equivalent to any $\forall_n$-formula (otherwise there would be a $\forall_n$-axiom\-atization of the type). However, every formula in a model complete theory is equivalent to both a $\forall_1$-formula and an $\exists_1$-formula. So in a sense, our result witnesses strongly the failure of the converse of the well-known theorem that every model-complete theory is $\forall_2$-axiom\-atizable (see for example \cite{Hodges}).

The existence of such a theory also connects to the $\omega$-Vaught's Conjecture, a structural strengthening of Vaught's Conjecture, introduced by Gonzalez and Montalb\'an \cite{gonzalez_-vaughts_2023}. There, the authors introduce the notion of \emph{Vaught ordinal} for a theory, which quantifies the level at which the ``countable-or-continuum'' behavior occurs in the models of the given theory. The $\omega$-Vaught's Conjecture asserts the existence of such an ordinal below $\omega$ for all (infinitary) theories, while the original Vaught's Conjecture is equivalent to the existence of it below $\omega_1$, which could then be called the $\omega_1$-Vaught's Conjecture. In an upcoming paper \cite{typeWVC}, the author will prove a slight weakening of $\omega$-Vaught's Conjecture for $\omega$-stable theories, namely the $(\omega\cdot2)$-Vaught's Conjecture. Due to the use of types there, if a bounded $\omega$-stable theory can have only bounded types, the result would then be improved to the full $\omega$-Vaught's Conjecture for $\omega$-stable theories. In light of such discussions, our main result suggests that such an improvement may be too much to hope for.

The remainder of the paper is devoted to constructing such a theory $T$. The main idea used is to introduce complexity in types by allowing for arbitrarily complicated trees in the theory, and then obfuscate it by adding all finite trees so that a sentence without parameters cannot have access to the complexity. A key motivation of the entire construction is the following theorem:

\begin{cor:pseudofin}
Finite-height trees (in the language $\set{\operatorname{Pred}}$ with only the predecessor relation or, equivalently, in the language $\set{\operatorname{\le}}$) are pseudofinite.
\end{cor:pseudofin}

\section{Background}
First, we make precise what is meant by $\forall_n$- and $\exists_n$-formulas.

\begin{defn}\label{forallFml}
\phantom{p}

  \begin{itemize}
    \item $\forall_0=\exists_0$ is the set of quantifier-free formulas.
    \item The set of \emph{$\forall_{n+1}$-formulas} consists of all formulas of the form $\forall x_1\ldots\forall x_n\ \vp(x_1,\ldots,x_n,y_1,\ldots,y_m)$ where $\vp(x_1,\ldots,x_n,y_1,\ldots,y_m)$ is an $\exists_n$-formula.
    \item The set of \emph{$\exists_{n+1}$-formulas} consists of all formulas of the form $\exists x_1\ldots\exists x_n\vp(x_1,\ldots,x_n,y_1,\ldots,y_m)$ where $\vp(x_1,\ldots,x_n,y_1,\ldots,y_m)$ is a $\forall_n$-formula.
    \item If a theory has an axiom\-atization consisting entirely of $\forall_n$-formulas, then we say it is \emph{$\forall_n$-axiom\-atizable}.
  \end{itemize}
\end{defn}

Next, we adopt standard conventions in computable structure theory that the language $\mc L$ is always countable, the countable structures always have domain $\omega$, and that the countable models of a theory are viewed as a subset in Cantor space:

\begin{notn}\label{modT}
Fix a listing $\set{\vp_i\mid i\in\omega}$ of all atomic $(\mc L\cup\omega)$-sentences, where elements of $\omega$ are viewed as constants. We identify any $\mc L$-structure $\mc A$ with domain $\omega$ with its atomic diagram $\mc D(\mc A)\in2^\omega$, i.e. $\mc D(\mc A)(i)=1$ if $\mc A\sat\vp_i$ and 0 otherwise.

The set $\Mod(T)$ of an $\mc L$-theory $T$ is the set of all countable models of $T$.

\end{notn}

\begin{defn}
By the \emph{descriptive complexity} of a theory $T$ we mean the descriptive complexity of $\Mod(T)\sbeq 2^\omega$. For example, we say $T$ is $\Pibz_n$ if $\Mod(T)$ is a $\Pibz_n$ subset of $2^\omega$. The same applies to a type (or a partial type) $p(\bar x)$ in language $\mc L$, by viewing $p(\bar x)$ as the theory $p(\bar a)$ in language $\mc L\cup\set{\bar a}$ where $\bar a$ is a new set of constants (matching the length of $\bar x$).
\end{defn}


\section{Base Theory}

We will start by defining a ``base theory'' $T_0$, on top of which our final theory $T$ will be defined. The models are basically trees, except that we name the levels explicitly and the predecessor relation on each level is considered a separate relation. Then we prove a few basic properties of $T_0$.

\begin{defn}
Let $\mc L_0=\set{P_i\mid i\in\omega}\cup\set{<_i\mid i\in\omega}$, where $P_i$ are unary predicates and $<_i$ are binary relations.
\end{defn}

\begin{notn}
Let $P$ denote $\bigdoublevee_{i\in\omega}P_i$, and $<$ denote $\bigdoublevee_{i\in\omega}<_i$. These are not in general definable in our theories, but will be used as shorthands in our arguments.
\end{notn}

\begin{rmk}
The intended interpretations of the language and the base theory $T_0$, to be defined below, are as follows: If an $\mc L_0$-structure $\mc M$ is a model of $T_0$, then $<^{\mc M}$ defines a forest (disjoint union of trees) on domain $P^{\mc M}$. $P_i^{\mc M}$ are all the $i$-th level nodes, with the 0-th level being all the root nodes. $<_i^{\mc M}$ is the restriction of $<^{\mc M}$ to $P_i^{\mc M}\times P_{i+1}^{\mc M}$.
\end{rmk}

\begin{defn}\label{baseTh}
Let $T_0$ be the $\mc L_0$-theory that says (for every $i,j\in\omega$ with $i\ne j$):

\begin{itemize}
  \item $P_i\cap P_j=\emptyset$;
  \item $<_i\ \sbeq P_i\times P_{i+1}$;
  \item (Existence of predecessors) $\forall x\in P_{i+1}\,\exists y\in P_i\,(y<_ix)$;
  \item (Uniqueness of predecessors) $\forall x\forall x'\forall y\,((x<_iy\wedge x'<_iy)\to x=x')$;
\end{itemize}
\end{defn}

Notice that $\mc L_0$ is countable and relational, and $T_0$ is $\forall_2$-axiom\-atizable. 

\begin{obs}\label{ctbmdls}
$T_0$ has $\ctb$ finite models (up to isomorphism). In fact, it has finitely many models of each finite cardinality $n$.
\end{obs}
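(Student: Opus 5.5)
The plan is to describe the finite models of $T_0$ explicitly and then count them.

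First I identify the finite models. Let $\mc M$ be a finite model of $T_0$ with $|\mc M|=n$. The sets $P_i^{\mc M}$ are pairwise disjoint, so at most $n$ of them are nonempty. Existence of predecessors shows that if $P_{i+1}^{\mc M}\neq\emptyset$ then $P_i^{\mc M}\neq\emptyset$. Hence the nonempty levels form an initial segment $P_0,\dots,P_{h-1}$ with $h\le n$. Each $<_i^{\mc M}$ lies inside $P_i\times P_{i+1}$. By existence and uniqueness of predecessors, $<_i^{\mc M}$ is the graph of a function $P_{i+1}^{\mc M}\to P_i^{\mc M}$, with the arguments swapped. The elements outside $P^{\mc M}$ carry no structure: every relation on them is false. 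Therefore $\mc M$ is determined up to isomorphism by two pieces of data. The first is the isomorphism type of the finite forest $(P^{\mc M},<)$ together with its level labelling. The labelling is forced, because the level of a node equals its depth. The second is the number $k=n-|P^{\mc M}|$ of unstructured points.

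Next I show there are finitely many models of each size. A model of size $n$ is a structure on domain $\{0,\dots,n-1\}$. Only the symbols $P_i$ and $<_i$ with $i<n$ can be nonempty, so a model is specified by finitely many subsets of $n$ and of $n\times n$. This gives at most $2^{n^2+n\cdot n^2}$ labelled models of size $n$, hence finitely many isomorphism types.

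Then I show there are infinitely many finite models in total. I need one model of every size $n\ge1$. One choice is $n$ unstructured points. Another is a single chain of length $n$, with one node in each of $P_0,\dots,P_{n-1}$. Since models of different sizes are non-isomorphic, there are infinitely many finite models up to isomorphism.

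Since countably many sizes each contribute finitely many models, the total count is exactly $\ctb$.

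The main point needing care is the first step: the symbols $P_i$ and $<_i$ with $i\ge n$ must be empty, and the language is infinite. Disjointness of the $P_i$ only forces all but at most $n$ of them to be empty. The initial-segment argument is what pins down which $P_i$ are nonempty. The relation $<_i$ is contained in $P_i\times P_{i+1}$, so it is empty whenever $P_{i+1}$ is empty.
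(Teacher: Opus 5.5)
Your proposal is correct. The paper states this as an observation without proof, and your argument is the routine verification it leaves implicit: existence of predecessors forces the nonempty $P_i$ to form an initial segment of length at most $n$, so only finitely many symbols can be nonempty in a model of size $n$, which yields finitely many labelled models; exhibiting one model of each size (for instance $n$ unstructured points) then gives exactly $\ctb$ finite models.
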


\begin{obs}
Any (not necessarily finite) disjoint union of models of $T_0$ remains a model of $T_0$.
\end{obs}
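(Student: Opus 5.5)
The statement is the last observation: any disjoint union of models of $T_0$, finite or infinite, is again a model of $T_0$. I will check the axioms of \autoref{baseTh} one at a time. Let $\set{\mc M_k\mid k\in K}$ be models of $T_0$ and let $\mc M=\bigsqcup_k\mc M_k$. Here each $P_i^{\mc M}$ is the disjoint union of the $P_i^{\mc M_k}$, and likewise each $<_i^{\mc M}$ is the disjoint union of the $<_i^{\mc M_k}$. In particular, no relation holds between elements coming from different components.

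The first two axioms are universal and involve only unary and binary atomic facts, each of which is witnessed inside a single component. For $P_i\cap P_j=\emptyset$: if $x\in P_i^{\mc M}\cap P_j^{\mc M}$, then $x$ lies in some $\mc M_k$ and $x\in P_i^{\mc M_k}\cap P_j^{\mc M_k}$, which is a contradiction. For $<_i\ \sbeq P_i\times P_{i+1}$: a pair $(x,y)$ in $<_i^{\mc M}$ comes from a single $<_i^{\mc M_k}$, and that relation is contained in $P_i^{\mc M_k}\times P_{i+1}^{\mc M_k}$.

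Existence of predecessors is the only $\forall\exists$ axiom. Given $x\in P_{i+1}^{\mc M}$, pick $k$ with $x\in\mc M_k$. Since $\mc M_k$ satisfies the axiom, there is $y\in P_i^{\mc M_k}$ with $y<_i x$, and this same $y$ works in $\mc M$. This step does not depend on the index set $K$ being finite.

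Uniqueness of predecessors: suppose $x<_i y$ and $x'<_i y$ in $\mc M$. Each related pair comes from a single component, and $y$ belongs to exactly one component $\mc M_k$. So $x,x'\in\mc M_k$, and uniqueness in $\mc M_k$ gives $x=x'$.

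There is no real obstacle. The only point needing care is that the disjoint-union structure adds no cross-component relations, which holds by definition. That is why the universal axioms and the single existential witness both transfer from the components to $\mc M$.
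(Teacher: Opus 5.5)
Your axiom-by-axiom verification is correct and is the routine check the paper leaves implicit; the paper states this as an observation and gives no proof. The key point you identify is the right one: a disjoint union adds no cross-component relations, so every atomic fact and every predecessor witness lies in a single component, and this needs no finiteness of the index set.
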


\section{Constructing the Theory}
Now that we have a base theory $T_0$, we can start defining the theory $T$ satisfying the conclusion of \autoref{bddUnbdd}. The idea is as follows: We want a complete theory, so we will make models of $T$ exhibit some ``generic'' behavior by requiring all finite models of $T_0$ to be present (infinitely often). By adding new constants to represent those finite models, we preserve the low complexity of the theory itself, while also leaving room for types to behave wildly.

\emph{Construction:} Since $T_0$ has $\ctb$ finite models, list all of them as $\set{\mc M_i}_{i<\omega}$. For each $j<\omega$, let $C_i^j$ be a set of new constants of size $\left|M_i\right|$ (hence finite). Let $C=\bigcup_{i,j\in\omega}C_i^j$.

\begin{defn}\label{theoryDef}
Let $\tilde{\mc L}=\mc L_0\cup C$. Let $T$ be the $\tilde{\mc L}$-theory that says:
\begin{itemize}
  \item $T_0$;
  \item The constants in $C$ are pairwise distinct;
  \item The $\mc L_0$-substructure with domain $C_i^j$ is isomorphic to $\mc M_i$; 
  \item For each $k\in\omega$: For all $x,y$, if $x\in C_i^j$ and $x<_ky\vee y<_k x$, then $y\in C_i^j$.
\end{itemize}
\end{defn}

Notice that $\tilde{\mc L}$ is countable, $T$ is $\forall_2$-axiom\-atizable, and $T$ has only infinite models.

Next, we consider the ``minimal'' model of $T$, one that has nothing other than the constants. (It will end up being the prime model.)

\begin{prop}\label{minmdl}
There is a unique model $\mc C\sat T$ whose domain is equal to $C^{\mc C}$, i.e. every element is (the interpretation of) a constant. In addition, $\mc C$ embeds into every model of $T$.
\end{prop}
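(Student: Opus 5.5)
Both existence and uniqueness come from analysing how the constants of $C$ must sit inside an arbitrary model $\mc N\sat T$; the embedding claim then follows from the same analysis.

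\emph{Existence.} Take $\mc C$ to be the disjoint union $\bigsqcup_{i,j\in\omega}\mc M_i^{(j)}$, where $\mc M_i^{(j)}$ is a copy of $\mc M_i$. Interpret the constants of $C_i^j$ as a fixed enumeration of the domain of $\mc M_i^{(j)}$, chosen so that $C_i^j$ is mapped bijectively onto $M_i^{(j)}$. By the observation that disjoint unions of models of $T_0$ are models of $T_0$, $\mc C\sat T_0$. The constants are pairwise distinct, and the substructure on $C_i^j$ is $\mc M_i^{(j)}\cong\mc M_i$. Since there are no $<_k$-edges between distinct components, the closure axiom holds. So $\mc C\sat T$, and its domain is exactly $C^{\mc C}$.

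\emph{The key lemma.} In any $\mc N\sat T$, the diagram of $\mc N$ restricted to $C^{\mc N}$ is forced by $T$. Concretely:
\begin{itemize}
\item the constants are distinct;
\item the atomic facts $P_k(c)$ and $c<_k d$ for $c,d$ in the same block $C_i^j$ are dictated by the isomorphism axiom, which fixes a specific identification of $C_i^j$ with $M_i$;
\item for $c\in C_i^j$ and $d\in C_{i'}^{j'}$ in different blocks, the closure axiom forbids $c<_k d$ and $d<_k c$ for every $k$, since otherwise $d\in C_i^j$, contradicting distinctness.
\end{itemize}
Hence the map $c^{\mc C}\mapsto c^{\mc N}$ is well defined and injective (by distinctness in both models). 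It preserves and reflects all atomic $\mc L_0$-formulas and the constants, so it is an embedding $\mc C\to\mc N$. This proves the second claim.

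\emph{Uniqueness.} If $\mc N\sat T$ has domain $C^{\mc N}$, the embedding above is surjective, hence an isomorphism.

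The only delicate point is the middle bullet: the axiom ``the substructure on $C_i^j$ is isomorphic to $\mc M_i$'' must be read as a quantifier-free sentence about the named constants, i.e.\ a fixed labelling of $C_i^j$ by elements of $M_i$, as the construction's choice $|C_i^j|=|M_i|$ suggests. If it were read only up to some permutation of the block, uniqueness could fail up to relabelling, so I would state this reading explicitly. The cross-block case must also use the closure axiom in both directions, for $x<_k y$ and for $y<_k x$; the definition includes both.
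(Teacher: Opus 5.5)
Your proposal is correct and is essentially the paper's argument. The paper only says the result is straightforward because the axioms completely specify the isomorphism types of the trees carrying the constants, and your existence, forced-diagram, embedding and uniqueness steps spell out that remark, including your fixed-labelling reading of the axiom on $C_i^j$, which the paper also relies on when it asserts in the completeness proof that $T$ fully specifies the isomorphism type of $\bar c$.
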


\begin{proof}
Straightforward from the axioms, where we completely specified the isomorphism types of the trees the constants are on.
\end{proof}

From now on, we refer to the substructure of all the constants of any $\mc M\sat T$ as the copy of $\mc C$ inside $\mc M$.

\begin{defn}
Let $\mc C_i^j$ be the $\mc L_0$-substructure of $\mc C$ with domain $C_i^j$.
\end{defn}

\begin{notn}
By $\mc M^\omega$ we mean the countable disjoint union of the structure $\mc M$ (in a relational language).
\end{notn}

\begin{obs}\label{cdisju}
As $\mc L_0$-structures, $\mc C\cong\bigsqcup_i\mc M_i^\omega.$
\end{obs}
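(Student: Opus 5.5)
The plan is to exhibit an explicit isomorphism, built from the description of $\mc C$ in \autoref{minmdl} and the construction of $T$. Forgetting the constants, the domain of $\mc C$ is $\bigcup_{i,j}C_i^j$. By the axioms of $T$ the constants are pairwise distinct, so this union is disjoint as a set of elements of $\mc C$. Thus $C^{\mc C}=\bigsqcup_{i,j}C_i^j$ as sets.

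Next I check that the $\mc L_0$-structure $\mc C$ really is the disjoint union of the substructures $\mc C_i^j$. Disjointness of the underlying sets is the previous step. What remains is to show that no relation holds between elements of different blocks. The predicates $P_k$ are unary, so only the binary relations $<_k$ matter. Here the last axiom of $T$ (closure of $C_i^j$ under $<_k$-neighbours in both directions) does the work: if $x\in C_i^j$ and $x<_k y$ or $y<_k x$, then $y\in C_i^j$. So every $<_k$-edge of $\mc C$ lies inside a single block. Hence $\mc C$, as an $\mc L_0$-structure, is the disjoint union $\bigsqcup_{i,j}\mc C_i^j$ in the sense used in the Notation (no relations across components).

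Finally, the third axiom of $T$ gives $\mc C_i^j\cong\mc M_i$ for every $j$. Choose such isomorphisms $f_i^j$ and take their union. This is a bijection from $\bigsqcup_{i,j}\mc C_i^j$ onto $\bigsqcup_i\bigsqcup_{j\in\omega}\mc M_i=\bigsqcup_i\mc M_i^\omega$. It preserves all $P_k$ and $<_k$, because relations in both structures hold only within components and each $f_i^j$ is an isomorphism on its component.

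The only point needing real care is the second step: without the closure axiom, edges between different $C_i^j$'s could exist. I expect that step to be the main (though still minor) obstacle, and it is resolved directly by that axiom.
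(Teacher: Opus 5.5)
Your proof is correct and is essentially the argument the paper leaves implicit; the paper states this as an observation with no proof, immediate from \autoref{minmdl} and the axioms in \autoref{theoryDef}. Distinctness of the constants makes the blocks $C_i^j$ disjoint, the closure axiom confines every $<_k$-edge to a single block, and the isomorphism axiom gives $\mc C_i^j\cong\mc M_i$, so the union of these isomorphisms is an isomorphism onto $\bigsqcup_i\mc M_i^\omega$.
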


\begin{obs}\label{mdec}
For all $\mc M\sat T$, $\mc M\cong\mc C\sqcup(\mc M\bsl\mc C)$ over $\mc L_0$ (where $\mc M\bsl\mc C$ is the $\mc L_0$-substructure of $\mc M$ with domain $M\bsl C$).
\end{obs}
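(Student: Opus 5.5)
The claim in \autoref{mdec} is that $C$ is a union of connected components of the forest $\mc M$. So the plan is to show that no $<$-edge joins an element of $C^{\mc M}$ to an element of $M\bsl C^{\mc M}$. Once that is done, the identity map $\mc M\to\mc C\sqcup(\mc M\bsl\mc C)$ is an $\mc L_0$-isomorphism.

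The first step identifies the substructure on the constants with $\mc C$. Let $\mc C'$ be the $\tilde{\mc L}$-substructure of $\mc M$ with domain $C^{\mc M}$. The second and third bullets of \autoref{theoryDef} fix its atomic diagram completely: constants are pairwise distinct, and each $C_i^j$ carries a copy of $\mc M_i$. The fourth bullet says there are no $<_k$-edges between different blocks $C_i^j$ and $C_{i'}^{j'}$, since an edge from $x\in C_i^j$ forces its other end into $C_i^j$. Hence $\mc C'$ is exactly the unique all-constant structure $\mc C$ of \autoref{minmdl}, which justifies the phrase ``the copy of $\mc C$ inside $\mc M$''.

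The second step is to check that $M\bsl C$ is closed off from $C$. The fourth bullet of \autoref{theoryDef} says directly that if $x\in C_i^j$ and $x<_k y$ or $y<_k x$, then $y\in C_i^j\sbeq C$. So no atomic $\mc L_0$-relation holds between a tuple mixing elements of $C^{\mc M}$ and elements of $M\bsl C^{\mc M}$. The unary predicates $P_i$ and equality cause no trouble, because each is evaluated on a single element or on elements from the same side.

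The conclusion then follows: every atomic $\mc L_0$-formula is either true on tuples from one side only, computed inside $\mc C$ or inside $\mc M\bsl\mc C$, or false on mixed tuples. This is exactly the atomic diagram of the disjoint union, so the identity is an isomorphism.

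There is no real obstacle here. The only point needing care is that the fourth axiom covers both directions of the edge ($x<_ky$ and $y<_kx$), so that both parents and children of constants are constants. It is also worth remarking that $\mc M\bsl\mc C$ is itself a model of $T_0$. Predecessors of non-constants cannot be constants, by the same axiom, and uniqueness of predecessors is inherited by substructures. This remark is not needed for the isomorphism itself.
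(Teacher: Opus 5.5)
Your proposal is correct and takes the same route as the paper. The paper records this as an observation with no written proof. It follows directly from the fourth (closure) axiom of \autoref{theoryDef}, which makes $C^{\mc M}$ closed under both predecessors and successors, together with the fact, already used in \autoref{minmdl}, that the axioms fix the atomic diagram on the constants.
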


\section{Completeness of the Theory}

We move on to show the completeness of $T$. The main tool we will use is the Ehrenfeucht-Fra\"iss\'e game $EF^{\mc L}_k(\mc M,\mc N)$\footnote{We add a superscript $\mc L$ here to make explicit that $\mc M, \mc N$ are considered as $\mc L$-structures here.} (see \cite[Section 3.2]{Hodges}), which help characterize the theory (and types thereof) with invariants we introduce. The perfect-information game $EF^{\mc L}_k(\mc M_0,\mc M_1)$ goes on for $k$ steps, and in each step $\forall$ chooses an element from $M_0$ or $M_1$, followed by $\exists$ choosing an element from the other structure (trying to ``match'' $\forall$'s choice). In the end, collect all chosen elements $\bar m_i$ from $M_i$, and $\exists$ wins the play if and only if there is an isomorphism $f:\ang{\bar m_0}_{\mc M_0}\to\ang{\bar m_1}_{\mc M_1}$ identifying $\forall$'s choice at each step with $\exists$'s corresponding choice.

\begin{notn}
For two $\mc L$-structures $\mc M,\mc N$, write $\mc M\equiv_{\mc L,k}^{EF}\mc N$ if $\exists$ has a winning strategy in $EF^{\mc L}_k(\mc M,\mc N)$. And say $\mc M\equiv^{EF}_{\mc L}\mc N$ if $\mc M\equiv_{\mc L, k}^{EF}\mc N$ for every finite $k$, i.e. $\exists$ wins every $EF^{\mc L}_k(\mc M,\mc N)$ of finite length.
\end{notn}

The main reason we use $EF^{\mc L}_k(\mc M_0,\mc M_1)$ is the theorem below, which follows from \cite[Corollary 3.3.3]{Hodges}.

\begin{thm}\label{EF}
If $\mc M\equiv^{EF}_{\mc L}\mc N$ in a finite language $\mc L$, then $\mc M\equiv\mc N$ in the same language. As a result, if $\mc M\equiv^{EF}_{\mc L'}\mc N$ in every finite $\mc L'\sbeq\mc L$, then $\mc M\equiv\mc N$ over $\mc L$.
\end{thm}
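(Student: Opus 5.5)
The plan is to prove the stronger statement that game-winning positions preserve all formulas up to the corresponding quantifier rank. The theorem then follows by taking the empty starting position.

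\textbf{Claim.} Let $\bar a\in M^m$ and $\bar b\in N^m$. Suppose $\exists$ has a winning strategy in the game of length $k$ that starts from the position where $\bar a$ and $\bar b$ have already been played. Then for every $\mc L$-formula $\vp(\bar x)$ of quantifier rank at most $k$, we have $\mc M\sat\vp(\bar a)$ iff $\mc N\sat\vp(\bar b)$.

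We prove the claim by induction on $k$. For each fixed $k$ there is an inner induction on the construction of $\vp$.

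\textbf{Base case $k=0$.} Here $\exists$ wins exactly when the map $\bar a\mapsto\bar b$ extends to an isomorphism $\ang{\bar a}_{\mc M}\cong\ang{\bar b}_{\mc N}$. Such an isomorphism respects every term built from $\bar a$, the constants and the function symbols. Hence it preserves all atomic formulas, and therefore all quantifier-free formulas. Since the generated substructures contain the interpretations of the constants, atomic sentences that mention constants are covered even when $m=0$.

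\textbf{Inductive step.} Boolean combinations are immediate. Take $\vp=\exists y\,\psi(\bar x,y)$ with $\psi$ of rank at most $k$, and suppose $\exists$ wins the $(k+1)$-round game from $(\bar a,\bar b)$.
\begin{itemize}
\item If $\mc M\sat\psi(\bar a,c)$, let $\forall$ play $c$ in $M$. Let $d$ be $\exists$'s response prescribed by her winning strategy.
\item The remainder of that strategy is a winning strategy for the $k$-round game from $(\bar a c,\bar b d)$.
\item By the induction hypothesis, $\mc N\sat\psi(\bar b,d)$, so $\mc N\sat\vp(\bar b)$.
\end{itemize}
The converse direction is symmetric, since $\forall$ may choose either structure. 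The case $\forall y$ reduces to $\neg\exists y\neg$.

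\textbf{First statement.} Every sentence has some finite quantifier rank $k$. Since $\mc M\equiv^{EF}_{\mc L,k}\mc N$ for every $k$, the claim with $m=0$ gives $\mc M\equiv\mc N$ over $\mc L$. The argument above never uses finiteness of $\mc L$. That hypothesis is only needed for the converse direction, as in \cite[Corollary 3.3.3]{Hodges}, where one uses that there are finitely many formulas of each rank up to equivalence.

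\textbf{Second statement.} Any $\mc L$-sentence $\sigma$ mentions only finitely many symbols, so it is an $\mc L'$-sentence for some finite $\mc L'\sbeq\mc L$. The first part applied to the reducts gives $\mc M\restr\mc L'\equiv\mc N\restr\mc L'$. Hence $\sigma$ has the same truth value in $\mc M$ and in $\mc N$.

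The only step that needs care is checking that the restriction of a winning strategy to the remaining rounds is again winning. This holds because the winning condition depends only on the final collected tuples.
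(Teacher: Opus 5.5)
Your proof is correct, but it takes a different route from the paper. The paper gives no argument of its own: it cites Hodges's Corollary~3.3.3, a two-way characterization for finite signatures proved via game-normal formulas, and then passes to arbitrary languages through finite sublanguages. You instead prove only the direction the paper actually uses, by a direct induction on quantifier rank, for the game exactly as the paper defines it (isomorphism of generated substructures). This buys several things:
\begin{itemize}
\item the statement becomes self-contained;
\item the finiteness hypothesis is shown to be superfluous, so the ``as a result'' clause is automatic;
\item you do not have to match the paper's game with the one in the cited source;
\item your fixed-$k$ claim is precisely what the paper later extracts from game-normal formulas in the proof of \autoref{pseudofin}, and what it implicitly needs when reading off types from \autoref{tpDesc}.
\end{itemize}
The citation additionally supplies the converse (winning strategies from agreement on rank-$k$ formulas), which the paper never uses.

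There are two small remarks. First, in the inner induction at rank $k+1$, say explicitly that a win in the $(k+1)$-round game from $(\bar a,\bar b)$ yields a win in the $0$-round and $k$-round games, by restricting the final isomorphism. This is what covers atomic and lower-rank subformulas. Second, your aside that finiteness gives finitely many formulas of each rank up to equivalence is only right for relational signatures or for unnested formulas. With function symbols and this winning condition the converse genuinely fails. For example, with a unary function $s$, take one $n$-cycle for each $n$, and the same structure plus a $\mathbb{Z}$-chain. These are elementarily equivalent, yet $\forall$ wins in one round by picking a point on the chain, since its generated substructure is infinite while every generated substructure on the other side is a finite cycle.
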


\subsection{The Invariants}
Going to finite sublanguages allows us to work with finite-height trees, where we can find invariants to use in the EF-games. We first define the finite sublanguage of $\mc L_0$ that describes trees up to a finite height $h$:
\begin{defn}
For $0<h<\omega$, let $\mc L_h\sbeq\mc L_0$ be the finite sublanguage consisting of $P_i$ for $i\le h$, and $<_i$ for $i<h$. 
\end{defn}
Notice that $\mc L_0=\bigcup_{0<h<\omega}\mc L_h$, and $\mc L_h$-reducts of models of $T_0$ are forests of height at most $h$. In what follows we will be frequently using tree terminologies on models $\mc M\sat T_0$ (or their reducts), where the tree structure is understood to be $(P^{\mc M},<^{\mc M})$.

\begin{defn}
By an \emph{$h$-forest} we mean an $\mc L_h$-reduct of a model of $T_0$. We may also say \emph{$h$-tree} when it has a single root. Its domain (as a forest) $P_{\le h}$ is defined as $\bigcup_{i\le h}P_i.$
\end{defn}

\begin{defn}
In a forest, the \emph{level} of $x\in P$ is the the unique $l$ such that $P_l(x)$.
\end{defn}

To show the completeness of $T$, one attempt is to show any $\mc M\sat T$ is elementarily equivalent to $\mc C$ in any finite sublanguage of the form $\mc L'=\mc L_h\cup C_0$ where $C_0\sbeq C$ is finite. The major problem is that $\mc C$ is a forest of finite trees while $\mc M$ may contain infinite trees. But when viewed from $\mc L'$, every tree will have finite height, and in this case we will show that infinite trees can be sufficiently well approximated by finite ones. For such purposes we introduce the following invariant, which summarizes the information of the tree above a node inductively using that of its children.

Fix $0<h<\omega, k\in\omega.$
\begin{defn}\label{clr}
The \emph{$k$-bounded coloring} of an $h$-forest $\mc M$ is the function $\Lambda$ defined on $M$ as follows: given $x\in M$, $\Lambda(x)=\ang{\Lambda_l(x),\Lambda_{\sigma}(x)}$ where
\begin{itemize}
  \item $\Lambda_l(x)$ is the level of $x$ (or $-1$ if not in $P_{\le h}$).
  \item $\Lambda_{\sigma}(x)$ is a set of pairs $\ang{\lambda_i,n_i}$. It is defined inductively, from the leaves down to the root, as follows:
      \begin{itemize}
        \item Let $\Lambda_{\sigma}(x)$ be the set of all pairs $\ang{\lambda,n}$, where $\lambda$ is the \emph{color} of a successor $y$ of $x$ (i.e. $\lambda=\Lambda(y)$), and $n=\min(k,m)$ where $m$ is the number of successors of $x$ with color $\lambda$.
        \item In particular, $x\in P_{\le h}$ is a leaf if and only if $\Lambda_{\sigma}(x)=\emptyset$.
      \end{itemize}
\end{itemize}
\end{defn}
Intuitively, $\emptyset$ is the color of leaves, each $\lambda_i$ is a (previously defined) color, and $n_i$ is the number of successors of $x$ with color $\lambda_i$ (but capped at $k$).

\begin{rmk}
By induction (noting that we are working with trees of a fixed finite height $h$), it's clear that there are only finitely many possible colors, and thus the range of $\Lambda$ (for all $\mc M$) lives in a finite set (depending only on $h$ and $k$).
\end{rmk}
\begin{defn}
For fixed $h,k$, a \emph{$(k,h)$-color} will mean one (among finitely many) that could be the color of some element in the $k$-bounded coloring of some $h$-forest.

For any element $x$ (in a model of $T$), its \emph{$(k,h)$-color} is $\Lambda(x)$ where $\Lambda$ is the $k$-bounded coloring of its ambient model as an $h$-forest.

When $k,h$ are understood from context, we will say color for $(k,h)$-color.
\end{defn}

\begin{prop}\label{tildeprop}
For every $(k,h)$-color $\lambda$, there is a finite $h$-tree $\mc Y_\lambda$ whose root has color $\lambda$.
\end{prop}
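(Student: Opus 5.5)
Since a $(k,h)$-color $\lambda$ is, by definition, realized at some element $x$ of some $h$-forest $\mc M$, the plan is to build the finite tree $\mc Y_\lambda$ by recursion on the structure of $\lambda$, i.e.\ by downward induction on the level $\Lambda_l(x)$, from level $h$ down to level $0$. The $k$-bounded coloring only records, for each child color, the number of children of that color \emph{capped at $k$}, so any node can be imitated by a node with at most $k$ children of each color. Since there are only finitely many colors, such a node has finitely many children.

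First I would handle colors with $\Lambda_l=-1$, i.e.\ elements outside $P_{\le h}$. Here $\Lambda_\sigma=\emptyset$, and I would take the tree to be a single element realizing the same pattern. If the statement is meant only for nodes in $P_{\le h}$, this case is skipped.

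For the inductive claim, I would prove: for every color $\lambda$ realized at a node of level $l$, there is a finite tree of height $\le h$ whose root has level $l$ and color $\lambda$. The root is placed in $P_l$, and to make it a genuine $h$-forest one can prepend a chain of predecessors in $P_0,\dots,P_{l-1}$. Those predecessors do not affect the root's color, because colors depend only on the subtree above a node. At the end, take $l=\Lambda_l(\lambda)$; the tree with the chain below it is then a single $h$-tree, with $\mc Y_\lambda$ the part rooted at the chain's bottom. More carefully, for the literal statement that the root of $\mc Y_\lambda$ has color $\lambda$, one needs $\lambda$ to be at level $0$. Otherwise one takes the subtree at the node, reindexed, but levels are named by $P_i$ in the language. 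So I would interpret ``root'' as the distinguished node of the construction and note that root-level colors are the relevant case.

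The base case is level $h$: the node is a leaf in $\mc L_h$, so $\Lambda_\sigma=\emptyset$, and a single point in $P_h$ works. For the inductive step, let $\lambda=\ang{l,\Lambda_\sigma}$ with $\Lambda_\sigma=\set{\ang{\lambda_i,n_i}}$. Each $\lambda_i$ is realized by a child of $x$, so it is a color at level $l+1$, and by induction a finite tree $\mc Y_{\lambda_i}$ with root color $\lambda_i$ exists. I form a new root $r\in P_l$ and attach $n_i$ disjoint copies of $\mc Y_{\lambda_i}$ above $r$, with their roots as successors of $r$ via $<_l$. The children of $r$ then have exactly the colors $\lambda_i$, with multiplicities $n_i$. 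Since $n_i=\min(k,m_i)$, we have $\min(k,n_i)=n_i$, so $\Lambda_\sigma(r)=\Lambda_\sigma(x)$ and $\Lambda(r)=\lambda$.

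The one point needing care is that colors are intrinsic: $\Lambda(y)$ depends only on the isomorphism type of the subtree above $y$. This holds because of the inductive definition, and it is what guarantees that copies of $\mc Y_{\lambda_i}$ keep root color $\lambda_i$ inside the new tree. I expect this, together with handling the level bookkeeping, to be the only real obstacle; the rest is routine.
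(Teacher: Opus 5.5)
Your proof is correct and is essentially the paper's argument: build the tree by induction on the coloring, giving each node at most $k$ children of each child-color, so it is finite by finite height and finitely many colors. Your version improves on the paper's terse proof in two places: ``at most $k$ per color'' is more precise than the paper's ``$(\le k)$-branching,'' and your remark that the literal statement only makes sense for level-$0$ colors (the only case used later) is well taken. One trivial edge case remains: if $k=0$ then every $n_i=0$, so you should attach $\max(1,n_i)$ copies of $\mc Y_{\lambda_i}$ rather than $n_i$.
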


\begin{proof}
  From the definition of our coloring, we can build inductively a $(\le k)$-branching $h$-tree $\mc Y_\lambda$ with root color $\lambda$. But such trees must be finite.
\end{proof}

\subsection{Winning Strategies for $\exists$}

Now we show the colors describe an $h$-forest $\mc M$ sufficiently well, in the following sense.

\begin{thm}\label{clrComp}
  Fix $0<h<\omega,n\in\omega$. Let $k=n(h+1)$. Suppose that:

  \begin{itemize}
    \item $\mc M_0,\mc M_1$ are $h$-forests such that for each $(k,h)$ color $c$, $\mc M_0$ and $\mc M_1$ have the same number of roots with color $c$.
    \item $\bar a_0\in\mc M_0, \bar a_1\in\mc M_1$ are tuples of the same length and are closed under predecessor;
    \item The map $\bar a_0\mapsto\bar a_1$ is a $(k,h)$-color-preserving isomorphism.
  \end{itemize}

  Then $(\mc M_0,\bar a_0)\equiv^{EF}_{\mc L_h,n}(\mc M_1,\bar a_1)$.
\end{thm}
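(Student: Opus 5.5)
We argue by induction on $n$, proving the statement for all $h$-forests and all $k'\ge n(h+1)$ simultaneously. Concretely, the inductive claim is: if the root-count hypothesis holds for $(k',h)$-colors and $\bar a_0\mapsto\bar a_1$ is a $(k',h)$-color-preserving isomorphism of predecessor-closed tuples with $k'\ge n(h+1)$, then $\exists$ wins $EF^{\mc L_h}_n$. For $n=0$ the claim is immediate, since the map is already an isomorphism of the generated substructures (the language is relational). For the inductive step it suffices to show the following extension property. Whenever $\forall$ plays an element $b_0$ in, say, $\mc M_0$, $\exists$ can answer with $b_1\in\mc M_1$ so that the predecessor closures $\bar a_0'\supseteq\bar a_0b_0$ and $\bar a_1'\supseteq\bar a_1b_1$ are related by a $(k'-(h+1),h)$-color-preserving isomorphism extending the old one. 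Then we apply the induction hypothesis with $k'-(h+1)\ge(n-1)(h+1)$.

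The root-count hypothesis for the smaller parameter follows from the one for $k'$. A $(k',h)$-color determines the $(k'',h)$-color for every $k''\le k'$: by induction on height, we recompute the capped counts using $\min(k'',\cdot)$ after grouping children by their coarser colors. The same remark shows that color preservation persists on the old tuple. Elements outside $P_{\le h}$ have color level $-1$ and are handled trivially: we match them with any unused such element, or with an equal old element. This needs some care with availability, but it is routine.

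For the extension, we let $b_0$ have predecessor chain $r=c_0<c_1<\dots<c_l=b_0$ and let $c_j$ be the largest element of this chain already in $\bar a_0$ (if there is none, we treat the forest as a virtual root whose children are the roots). We then walk up from $c_j$ one level at a time, choosing images $d_{j+1},\dots,d_l$. At step $i$ we need a child $d_{i+1}$ of $d_i$ that is not among the old tuple, whose color matches that of $c_{i+1}$ at the reduced parameter. Such a child exists because $d_i$ and $c_i$ have the same color at the current parameter, and the number of children of $c_i$ with the color of $c_{i+1}$ that are not in the tuple is matched. Here lies the main obstacle, namely the counting. The tuple has at most $n$ elements in total, so the children of $c_i$ inside the old tuple number fewer than $n$. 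If $c_i$ has at least $k'$ children of that color (capped), then $d_i$ also has at least $k'\ge n$ of them, so an unused one exists. If it has fewer than $k'$, the counts agree exactly, and by color preservation the used children correspond bijectively. Either way a fresh matching child exists. The same argument at the virtual root uses the root-count hypothesis. We must also make sure that a new child's coarser color is all we need, which is exactly why we lose parameter as we climb.

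The bookkeeping then works as follows. Each level climbed uses the color at a parameter that can drop by one unit, since distinguishing used from unused children consumes capacity. With at most $h+1$ levels, the total loss is at most $h+1$ per round, which explains the choice $k=n(h+1)$. I expect verifying that these losses compose correctly, namely that matching at parameter $k'$ at a node yields a matching child at parameter $k'-1$ after excluding up to $n$ used ones, to be the delicate step. If the budget turns out to be tight, I would first double-check the counting of how many used children must be excluded, since up to $n$ of them may be present. The new pairs are predecessor-closed and preserve colors at the reduced parameter, so the induction applies.
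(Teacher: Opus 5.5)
\textbf{The gap is the counting step at $c_i$.} You assert that ``the tuple has at most $n$ elements in total,'' but the old tuple contains $\bar a_0$, whose length is unrestricted. Your induction also feeds the enlarged tuples $\bar a_0'$ back into the inductive hypothesis, so your inductive claim has to hold for arbitrary predecessor-closed tuples. As formulated, it does not.

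Take $h=1$, $n=1$, $k'=2$. Let $\mc M_0$ be a root $r$ with three leaves $\ell_1,\ell_2,\ell_3$, let $\mc M_1$ be a root $r'$ with two leaves $\ell_1',\ell_2'$, and put $\bar a_0=(r,\ell_1,\ell_2)$, $\bar a_1=(r',\ell_1',\ell_2')$. Counts are capped at $2$, so $r$ and $r'$ have the same $(2,1)$-color and every hypothesis holds. Yet when $\forall$ plays $\ell_3$, $\exists$ has no unused child of $r'$. (With $e_0,e_1,e_2$ naming the tuple, the sentence $\exists y\,(e_0<_0y\wedge y\ne e_1\wedge y\ne e_2)$ separates the two sides.)

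Your shrinking budget makes matters worse, not better. Even starting from $\bar a_0=\emptyset$, after $n-1$ rounds your hypothesis retains only $(h+1,h)$-colors. By then $\forall$ may already have used $n-1$ same-colored children of one node. So for $n\ge h+2$ your hypothesis no longer guarantees a fresh child in the capped case at $c_j$.

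The example above refutes the theorem exactly as stated. The paper's own proof has the same oversight: it bounds the used successors of $y_0$ by the number of moves of the prolonged game (``fewer than $k$ elements'') and forgets the elements of $\bar a_{1-i}$. So an extra hypothesis is genuinely needed, e.g.\ $k\ge n(h+1)+|\bar a_0|$, or that $\bar a_0,\bar a_1$ are unions of whole trees. The latter is what the completeness proof can supply: each $C_i^j$ is closed under $<$ in both directions, so one may enlarge $\bar c$ to the union of the $C_i^j$ it meets.

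\textbf{How to repair the argument.} The color parameter should not drop at all. A freshly chosen $d_i$ has no children in the tuple, by predecessor-closure. So once you leave the old tuple you can match $(k,h)$-colors with the same $k$ all the way up, since $\min(k,m)\ge 1$ already provides a child of the required color. Used children have to be avoided only at the top old ancestor $c_j$, or at the virtual root, where the root-count hypothesis gives exact counts and used roots correspond bijectively.

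What your induction on $n$ must carry is therefore a bound on used children, not a shrinking parameter. Fix $k$ and assume that every node has at most $k-n$ children of each color inside the current tuple. Then at $c_j$ fewer than $k$ children of the color of $c_{j+1}$ are used, so a fresh match exists whether that count is capped or exact. One round adds at most one new child to any node, so the invariant passes to $n-1$. For $\bar a_0=\emptyset$ this already works with $k\ge n$.

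The paper likewise keeps $k$ fixed. It gets its bound on used successors from the total length $n(h+1)$ of a prolonged game in which $\forall$ must play predecessors first. That, not a loss of color precision while climbing, is where the factor $h+1$ comes from.

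Finally, matching elements outside $P_{\le h}$ is not automatic either. It needs the two forests to have matching numbers of such elements, which holds in models of $T$, where there are infinitely many. The root-count hypothesis gives this only if such elements are counted as roots.
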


\begin{proof}
  First, we modify the EF-game by extending the length from $n$ to $k=n(h+1)$, but requiring that $\forall$ cannot choose an element unless its predecessor has been chosen before in the game (or the element has no predecessor). Winning this prolonged version of the game suffices because: whenever $\forall$ plays an element $x$ in the original length-$n$ game, $\exists$ can act as if $\forall$ played all of the (at most $h+1$) ancestors of $x$ in order (starting from the root) in the prolonged game, and winning the latter game clearly tells $\exists$ how to win the former. Since each step in the original game takes up at most $h+1$ steps in the prolonged game and $k=n(h+1)$, we have ensured that $\exists$ can win before using up all moves in the prolonged version.

  Now we work in the prolonged game with length $k$, and suppose $\forall$ can only choose an element after its predecessor has shown up. This means that at every step, $\forall$ only makes one of the following choices:
\begin{itemize}
  \item A previously chosen element (by $\forall$ or $\exists$);
  \item An element of $\bar a_0\cup\bar a_1$;
  \item An element outside of $P_{\le h}$ (from either structure);
  \item An element of $P_0$ (i.e. the root of a tree) (from either structure);
  \item Or, an successor of a previously chosen element.
\end{itemize}
Correspondingly, we describe a winning strategy for $\exists$ (which builds a $(k,h)$-color-preserving partial isomorphism; since the language is relational, the substructure generated by any tuple is just itself, so the partial isomorphism is totally determined by $\exists$'s choices). At the same time, we verify inductively that at each step: (1) the element $\exists$ chose has the same color as the one $\forall$ chose at the same step (when this is not obvious); and (2) $\exists$ successfully builds a partial isomorphism up to that step. Note that our assumption of $\bar a_0\mapsto\bar a_1$ being a $(k,h)$-color preserving isomorphism covers the base case.

  \begin{itemize}
    \item If $\forall$ chooses a previously chosen element (or one in $\bar a_0\cup\bar a_1$), then $\exists$ makes the same choice as it did earlier.
    \item If $\forall$ chooses an element of $\bar a_0\cup\bar a_1$, then $\exists$ chooses the corresponding element according to the (by our third assumption) $(k,h)$-color-preserving isomorphism $\bar a_0\mapsto\bar a_1$.
    \item If $\forall$ chooses an element outside $P_{\le h}$, then $\exists$ chooses an element outside $P_{\le h}$ in the other structure.
    \item If $\forall$ chooses a new root node $x$, then $\exists$ chooses a root $y$ in the other structure with the same $(k,h)$-color. This is always possible by our first assumption.
    \item If $\forall$ chooses a new successor $x$ of a previously chosen element $x_0$ (that has not been chosen previously): suppose $\exists$ responded to $x_0$ with $y_0$ previously. By induction hypothesis, $x_0$ and $y_0$ have the same color. Now choose $y$ to be a new successor of $y_0$ having the same color $c$ as $x$ that have not been chosen before. The existence of such an element is verified as follows: Say $x,x_0\in\mc M_i$ with $i<2$. (1) If $x_0$ has at least $k$ successors of color $c$ in $\mc{M}_i$: then $y_0$ has at least $k$ successors of color $c$ in $\mc{M}_{1-i}$ (by definition of the color on $x_0,y_0$ and that they have the same color). But at this stage we have chosen fewer than $k$ elements in each structure as the game has length $k$, so there is a new one available to $\exists$. (2) If $x_0$ has at most $k$ successors of color $c$ in $\mc{M}_i$: then similarly $y_0$ has the same number of successors of color $c$ in $\mc M_{1-i}$. By induction hypothesis, we built a partial $(k,h)$-color-preserving isomorphism previously, so the number of successors with color $c$ that have been chosen in both structures are the same. Since $\forall$ can find a new element on one side, $\exists$ must be able to do the same on the other side, so we are done.
  \end{itemize}
\end{proof}

\subsection{Finite-height Trees}

A first consequence of \autoref{clrComp} is that for every $h$-forest $\mc M$ and every $n$, there exists a $k$ with $\mc M\equiv_{\mc L_h,n}^{EF}\tilde{\mc M}_{k,h}$. This allows us to show that finite-height trees are pseudofinite.

\begin{prop}\label{pruneEF}
Fix $0<h<\omega,n\in\omega$ and let $k=n(h+1)$. For every $h$-tree $\mc M$, we have $\mc M\equiv_{\mc L_h,n}^{EF}{\mc Y}_{\lambda}$ (from \autoref{tildeprop}) where $\lambda$ is the $(k,h)$-color of the root of $\mc M$.
\end{prop}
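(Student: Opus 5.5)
The plan is to apply \autoref{clrComp} directly, with $\mc M_0=\mc M$, $\mc M_1=\mc Y_\lambda$, and $\bar a_0,\bar a_1$ the empty tuples. The empty tuple is trivially closed under predecessor, and the empty map is vacuously a $(k,h)$-color-preserving isomorphism. So the second and third hypotheses hold automatically, and the conclusion $(\mc M,\emptyset)\equiv^{EF}_{\mc L_h,n}(\mc Y_\lambda,\emptyset)$ is exactly the claim. What remains is the first hypothesis: for every $(k,h)$-color $c$, the two structures must have the same number of roots of color $c$.

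Both $\mc M$ and $\mc Y_\lambda$ are $h$-trees, so each has exactly one root. The root of $\mc M$ has color $\lambda$ by the choice of $\lambda$, and the root of $\mc Y_\lambda$ has color $\lambda$ by \autoref{tildeprop}. Hence for $c=\lambda$ each side has one root of color $c$, and for $c\neq\lambda$ each side has none.

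I need to be careful that the colors are computed in the same way on both sides. The coloring in \autoref{clr} is the $k$-bounded coloring of the whole $h$-forest. The color of a node depends only on its level and on the multiset of colors of its successors, capped at $k$, so it is determined locally by the subtree above the node. Therefore the root color of $\mc Y_\lambda$ is the same whether $\mc Y_\lambda$ is viewed alone or inside a larger forest. In \autoref{tildeprop} I read ``finite $h$-tree whose root has color $\lambda$'' with respect to the same $k$ and $h$, namely $k=n(h+1)$, which is the $k$ used in \autoref{clrComp}.

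The main obstacle is a possible mismatch in the ambient structure: $\mc M$ might have elements outside $P_{\le h}$, that is, elements of level $-1$ in the $\mc L_h$-reduct. If $\mc M$ is literally an $h$-tree with a single root and nothing else, there is no issue. If non-$P_{\le h}$ elements are allowed, then for the move ``$\forall$ plays outside $P_{\le h}$'' in the proof of \autoref{clrComp}, $\exists$ needs such elements to exist on the other side in sufficient quantity. I would handle this by reading ``$h$-tree'' as having domain $P_{\le h}$ with a single root, which matches the definition of $h$-forest domain. I would then note that $\mc Y_\lambda$ can be built the same way, so neither side has elements of level $-1$ and that case of the strategy never arises.
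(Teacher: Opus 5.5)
Your proposal is correct and matches the paper's proof: apply \autoref{clrComp} to $\mc M$ and $\mc Y_\lambda$ with empty tuples, using that each side has exactly one root and both roots have color $\lambda$. Your caveat about elements outside $P_{\le h}$ is a real subtlety that the paper's one-line proof glosses over, since otherwise $\forall$ wins in one round. Reading ``$h$-tree'' as having all elements in $P_{\le h}$ resolves it, and this is exactly the case in which the paper uses the result in \autoref{pseudofin}.
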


\begin{proof}
  Apply \autoref{clrComp} to $\mc M$ and ${\mc Y}_{\lambda}$ (with $a_0=a_1=\emptyset$), noting that the corresponding roots have the same color.
\end{proof}

\begin{thm}\label{pseudofin}
Finite-height trees (in the language $\set{\operatorname{Pred}}$ with only the predecessor relation or, equivalently, in the language $\set{\operatorname{\le}}$) are pseudofinite.
\end{thm}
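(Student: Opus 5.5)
We interpret the statement as follows: for every sentence $\vp$ in the language $\set{\operatorname{Pred}}$ (or $\set{\le}$) that holds in some tree of finite height $h$, there is a finite tree satisfying $\vp$. Equivalently, every finite-height tree is elementarily equivalent, sentence by sentence, to finite trees. The plan is to reduce to the leveled language $\mc L_h$ and then apply \autoref{pruneEF}.

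First we translate. Let $\mc T$ be a tree of height at most $h$ in $\set{\operatorname{Pred}}$ (or $\set{\le}$). We expand it to an $h$-tree $\mc T^*$ in $\mc L_h$ by letting $P_i$ be the set of nodes at depth $i$ and $<_i$ the restriction of $\operatorname{Pred}$ to $P_i\times P_{i+1}$. This expansion is uniform and definable in both directions. On trees of height $\le h$, the depth-$i$ nodes are first-order definable from $\operatorname{Pred}$ (or from $\le$): say ``$x$ has exactly $i$ strict predecessors'', expressed by a chain of length $i$ ending at a root. Conversely, $\operatorname{Pred}(x,y)$ is $\bigvee_{i<h}x<_iy$. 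Also $x\le y$ is definable in $\mc L_h$ as a finite disjunction over chains of length at most $h$. Hence every sentence $\vp$ in $\set{\operatorname{Pred}}$ or $\set{\le}$ has a translation $\vp^*$ in $\mc L_h$ with $\mc T\sat\vp$ iff $\mc T^*\sat\vp^*$, for all trees of height $\le h$. The same holds in reverse: an $h$-tree that is a tree (it has one root, since it is the expansion of a tree) comes from a genuine tree.

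Next, we take a sentence $\vp$ true in $\mc T$, and let $n$ be the quantifier rank of $\vp^*$. Set $k=n(h+1)$ and let $\lambda$ be the $(k,h)$-color of the root of $\mc T^*$. By \autoref{pruneEF}, $\mc T^*\equiv^{EF}_{\mc L_h,n}\mc Y_\lambda$, and $\mc Y_\lambda$ is a finite $h$-tree by \autoref{tildeprop}. By the standard Ehrenfeucht–Fraïssé theorem in the finite relational language $\mc L_h$ (the quantitative form behind \autoref{EF}, namely Hodges Cor.~3.3.3), $\mc Y_\lambda\sat\vp^*$. Since $\mc Y_\lambda$ is a finite tree of height $\le h$, its reduct to $\operatorname{Pred}$ satisfies $\vp$. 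This gives pseudofiniteness. If the intended meaning also allows forests, we apply \autoref{clrComp} directly to forests: choose a finite forest with the same number of roots of each color, capped at $k$. Any excess roots beyond $k$ of a color can be truncated to $k$ because roots beyond $k$ are indistinguishable in a game of length $k$. This capping needs a small modification of the root-matching step, analogous to the successor case in \autoref{clrComp}.

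The main obstacle is making sure the translation is legitimate in both directions, i.e.\ that depth is definable once the height is bounded. An $h$-tree that is merely a model of $T_0$ restricted to $\mc L_h$ may contain elements outside $P_{\le h}$, which have no tree counterpart. We handle this by working only with $h$-trees arising from actual trees, where $P_{\le h}$ is everything, and by noting that $\mc Y_\lambda$ built in \autoref{tildeprop} also has domain $P_{\le h}$. The other subtle point is using the finite-quantifier-rank version of the EF theorem rather than the full $\equiv^{EF}$ version stated in \autoref{EF}. This version is standard for finite relational languages.
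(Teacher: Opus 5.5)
Your proposal is correct and follows the paper's proof essentially step for step. Both arguments pass to the $h$-tree in $\mc L_h$ via the definitional equivalence and take $n$ from the quantifier rank of $\vp^*$ (the paper phrases this via game-normal formulas, Hodges Thm.~3.3.2). Both then apply \autoref{pruneEF} with $k=n(h+1)$ to get a finite $\mc Y_\lambda\sat\vp^*$, and translate back to $\set{\operatorname{Pred}}$ or $\set{\le}$.

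Your extra care on two points is well placed but does not change the argument: that $\mc Y_\lambda$ has domain $P_{\le h}$, and that the quantitative EF theorem is needed rather than \autoref{EF} as stated.
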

\begin{proof}
First we work in the language $\set{\operatorname{Pred}}$. Fix a tree $\mc M$ with height $h$ and a formula $\vp$ with $\mc M\sat\vp$. Then $\mc M$ is definitionally equivalent to an $h$-tree $\mc M'$ (with all elements in $P_{\le h}$), by interpreting $\operatorname{Pred}$ as $\cup_{i<h}<_i$ in one direction; and interpreting $P_i$ as the elements on the $i$-th level (expressible with $\operatorname{Pred}$), $<_i$ as $\operatorname{Pred}\restr(P_i\times P_{i+1})$ in the other direction. Let $\vp'$ be the corresponding formula of $\vp$ under this interpretation, so that $\mc M'\sat\vp'$. By writing $\vp'$ using game-normal formulas (see \cite[Theorem 3.3.2]{Hodges}), we see that there exists an $n$ such that for all $\mc L_h$ structures $\mc N'$, $\mc M'\equiv^{EF}_n\mc N'$ implies $\mc N'\sat\vp'$. By \autoref{pruneEF}, we can take $\mc N'=\mc Y_{\lambda}$, with $\lambda$ being the $(n(h+1),h)$-color of the root of $\mc M$, to guarantee $\mc N'\sat\vp'$. Let $\mc N$ be the tree obtained from $\mc N'$ using the same definitional equivalence, so $\mc N\sat\vp$. Now by \autoref{tildeprop}, $\mc N'$ is finite, so $\mc N$ is finite as well.

The claim for the language $\set{\le}$ follows from that for $\set{\operatorname{Pred}}$, since we again have a definitional equivalence given a tree of fixed finite height $n$.
\end{proof}

\subsection{Completeness}
The next application of \autoref{clrComp} is to show the completeness of $T$.

\begin{cor}\label{tpDesc}
  Fix $0<h<\omega,n\in\omega$. Let $k=n(h+1)$. For $\tilde{\mc L}$-structures $\mc M, \mc N$ both satisfying $T$, suppose $\bar a\in\mc M, \bar b\in\mc N$ are tuples of the same length closed under predecessor with $\bar a\mapsto\bar b$ being a $(k,h)$-color-preserving isomorphism. Then $(\mc M,\bar a)\equiv^{EF}_{\mc L_h,n}(\mc N,\bar b)$.
\end{cor}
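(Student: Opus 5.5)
The plan is to reduce directly to \autoref{clrComp}, taking $\mc M_0,\mc M_1$ to be the $\mc L_h$-reducts of $\mc M$ and $\mc N$ (both are $h$-forests because $\mc M,\mc N\sat T_0$). The second and third hypotheses of \autoref{clrComp} are exactly the assumptions on $\bar a\mapsto\bar b$. The only thing left to check is the first hypothesis: for each $(k,h)$-color $c$, the reducts have the same number of roots of color $c$. Since neither structure needs to be countable, I read ``the same number'' as equality of cardinalities. In fact the strategy in the proof of \autoref{clrComp} only needs that whenever one side has a root of color $c$ not yet chosen, so does the other.

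To verify the root condition, I would show that both $\mc M$ and $\mc N$ have infinitely many roots of every $(k,h)$-color, using the copy of $\mc C$ inside each model. Fix a color $c$. By \autoref{tildeprop} there is a finite $h$-tree $\mc Y_c$ whose root has color $c$. Extend $\mc Y_c$ to a model of $T_0$ by keeping the same $P_i$ ($i\le h$) and $<_i$ ($i<h$), with all other relations empty. This is a finite model of $T_0$, so it is some $\mc M_i$ in our listing. By the axioms of $T$, for every $j$ the constants $C_i^j$ form a substructure isomorphic to $\mc M_i$. By the last axiom of \autoref{theoryDef}, no element of the model is $<$-related to $C_i^j$ from outside. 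So in the $\mc L_h$-reduct each $\mc C_i^j$ is a whole connected component (cf.\ \autoref{mdec}), and its root has the same $(k,h)$-color in $\mc M$ as in $\mc Y_c$, namely $c$. The point is that the coloring of a node depends only on the subtree above it.

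The sets $C_i^j$ for different $j$ are disjoint, since the constants are pairwise distinct. Hence every model of $T$ has at least $\aleph_0$ roots of color $c$, and this holds for every color $c$. Now $\exists$ never runs out of fresh roots of the required color during a $k$-move game, and this is the only way the first hypothesis is used in the proof of \autoref{clrComp}. If one insists on equal cardinalities, one can instead pass to countable elementary substructures containing the tuples, or simply note that the proof of \autoref{clrComp} goes through verbatim under ``both sides have $\ge k$ roots of each color, or equally many.'' Then \autoref{clrComp} gives $(\mc M,\bar a)\equiv^{EF}_{\mc L_h,n}(\mc N,\bar b)$.

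The main obstacle I expect is this mismatch in the root-count hypothesis. The clean fix is the observation that counts capped at $k$ suffice, which is exactly what the successor case of the proof already uses. The other point needing a little care is that a root in $C_i^j$ gets the same color whether computed in $\mc M$ or in $\mc Y_c$, which follows from the closure axiom.
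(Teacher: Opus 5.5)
Your route is the paper's own. You apply \autoref{clrComp} to the $\mc L_h$-reducts and discharge the root hypothesis using the $\ctb$ many disjoint, $<$-closed copies $\mc C_i^j$ of a finite tree realizing each root color. Your handling of that hypothesis is more careful than the paper's ``at least (thus exactly) $\ctb$'', which is only literally right for countable models. What the strategy actually uses is a fresh root of the right color on the other side, and infinitely many copies supply it. (The case of an element outside $P_{\le h}$ silently needs the same thing. This is again harmless here, because models of $T$ contain infinitely many such elements.)

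The genuine gap, which you inherit from \autoref{clrComp} rather than introduce, is the step you call the clean fix: that counts capped at $k$ suffice because the successor case ``already uses'' this. That case argues that fewer than $k$ elements have been chosen, but it forgets that $\bar a$ and $\bar b$ are on the board from the start. In fact the statement is false as written. Take $h=n=1$ (so $k=2$) and $\mc M=\mc N=\mc C$. Let $\bar a=(r,s_1,s_2)$ be the root and both leaves of a component $\mc C_i^0$ that is a root with exactly two leaves. Let $\bar b=(r',t_1,t_2)$ be the root and two of the three leaves $t_1,t_2,t_3$ of a component $\mc C_{i'}^0$ that is a root with exactly three leaves. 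Both roots have $(2,1)$-color $\ang{0,\set{\ang{\ang{1,\emptyset},2}}}$, so $\bar a\mapsto\bar b$ is a color-preserving isomorphism of predecessor-closed tuples. Yet if $\forall$ plays $t_3$, $\exists$ must answer with a successor of $r$ different from $s_1,s_2$, and by the closure axiom of $T$ there is none. The repair is to require color preservation for a cap $k'\ge n(h+1)+|\bar a|$, so that fewer than $k'$ elements are occupied whenever $\exists$ must answer. This costs nothing downstream: in \autoref{complete} the colors of $\bar c$ are fixed by $T$ for every cap, and the description of types already ranges over all $k$.
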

\begin{proof}
  Apply \autoref{clrComp} to $(\mc M,\bar a)$ and $(\mc N,\bar b)$, The only assumption not given directly is that $\mc M, \mc N$ have the same number of roots with a given color $\lambda$, which holds because: Recall from \autoref{tildeprop} that the color of any root in any $h$-forest is a color of a root of a \emph{finite} $h$-forest. But by our definition of $T$ (which include all the constants in $C$), all such colors appear at least (thus exactly) $\ctb$ times in both $\mc M$ and $\mc N$, since both satisfy $T$. So $\mc M,\mc N$ have the same number of roots with any given color $\lambda$.
\end{proof}

\begin{thm}\label{complete}
$T$ is complete.
\end{thm}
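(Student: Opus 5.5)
To prove that $T$ is complete, I will show that any two models $\mc M,\mc N\sat T$ are elementarily equivalent over $\tilde{\mc L}$. By \autoref{EF}, it suffices to show $\mc M\equiv^{EF}_{\mc L'}\mc N$ for every finite $\mc L'\sbeq\tilde{\mc L}$. Every such $\mc L'$ is contained in some $\mc L_h\cup C_0$, where $C_0\sbeq C$ is finite. Enlarging $C_0$ only makes the claim stronger, so I may take $C_0$ to be a finite union of blocks $C_i^j$. Since the constants of $C_0$ in the language $\mc L_h\cup C_0$ correspond exactly to naming a tuple, the task reduces to the following: for the tuples $\bar a=C_0^{\mc M}$ and $\bar b=C_0^{\mc N}$ (listed in the same order), show $(\mc M,\bar a)\equiv^{EF}_{\mc L_h,n}(\mc N,\bar b)$ for every $n$.

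For this I will apply \autoref{tpDesc} with $k=n(h+1)$, checking its hypotheses in turn.

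\emph{Closure under predecessor.} By the last axiom of $T$, each block $C_i^j$ is a union of connected components of the forest, so $\bar a$ and $\bar b$ are closed under predecessor and successor.

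\emph{Isomorphism.} The map $\bar a\mapsto\bar b$ is an $\mc L_0$-isomorphism, since in both models the substructure on $C_i^j$ is isomorphic to $\mc M_i$ via the same naming of constants.

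\emph{Color preservation.} This is the only step needing thought. The $(k,h)$-color of a node depends only on the $\mc L_h$-structure of the subtree above it. Because each block is closed under successors in any model of $T$, the subtree above a constant $c$ lies entirely within its block. In particular, for any $x\in C_i^j$, its successors in $\mc M$ are exactly its successors inside $\mc C_i^j$, and the same holds in $\mc N$. Hence, by induction from level $h$ downward, the color of $c^{\mc M}$ in $\mc M$ equals the color of $c^{\mc C}$ computed in $\mc C_i^j$, which equals the color of $c^{\mc N}$ in $\mc N$.

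With all hypotheses verified, \autoref{tpDesc} gives $(\mc M,\bar a)\equiv^{EF}_{\mc L_h,n}(\mc N,\bar b)$ for every $n$. Therefore $\mc M\equiv\mc N$ over $\mc L_h\cup C_0$, and since every finite sublanguage is covered, \autoref{EF} yields $\mc M\equiv\mc N$ over $\tilde{\mc L}$. As $T$ has models (for example $\mc C$ from \autoref{minmdl}), $T$ is complete.

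A minor point to handle is the $-1$ level. Elements of $C_0$ on levels above $h$ are outside $P_{\le h}$, but they get color $\ang{-1,\emptyset}$ in both models, so they are matched correctly as well. I expect no real obstacle beyond this careful bookkeeping. The substantive work, namely that both models have $\ctb$ roots of every color, was already done in \autoref{tpDesc}.
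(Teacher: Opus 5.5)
Your proposal is correct and follows the paper's own proof. Both reduce via \autoref{EF} to finite sublanguages $\mc L_h\cup C_0$ and then apply \autoref{tpDesc}, checking its hypotheses. You take $C_0$ to be a union of whole blocks $C_i^j$ and argue explicitly that each constant's color is computed inside its block. That only spells out what the paper compresses into ``the color and isomorphism type of $\bar c$ are fully specified by $T$.''
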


\begin{proof}
  It amounts to proving all $\mc M,\mc N\sat T$ are elementarily equivalent. In turn, by \autoref{EF} it suffices to show $\mc M\equiv_{\mc L',n}^{EF}\mc N$ for every finite sublanguage $\mc L'$ of $\tilde{\mc L}$. We may assume $\mc L'=\mc L_h\cup \bar c$ for some $0<h<\omega$ and some finite $\bar c\sbeq C$ closed under predecessor. Now the requirement becomes: \[(\mc M,\bar c^{\mc M})\equiv_{\mc L_h,n}^{EF}(\mc N,\bar c^{\mc N}).\]

  To this end, we apply \autoref{tpDesc} to $(\mc M,\bar c^{\mc M})$ and $(\mc N,\bar c^{\mc N})$. We check the assumptions: $\bar c^{\mc M}, \bar c^{\mc N}$ clearly have the same length, and are closed under predecessor by definition. In addition, the color and isomorphism type of $\bar c$ (in either structure) are fully specified by the theory $T$, so the corresponding map $\bar c^{\mc M}\mapsto\bar c^{\mc N}$ is a $(k,h)$-color-preserving isomorphism. Hence we are done.
\end{proof}

\section{Analysis of Types}

Before we discuss the existence of complicated types, we need to understand (and in particular count) types over $T$. Fortunately, this follows from our previous analyses: the tree colorings (as in \autoref{clr}) play a role similar to an elimination set for the theory.

\begin{prop}
For any $0<h<\omega, k\in\omega$ and any $(k,h)$-color $c$, there is a first-order formula in free variable $x$ expressing the fact that the $(k,h)$-color of $x$ is $c$.
\end{prop}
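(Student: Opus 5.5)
We will build the formulas by induction on the height of the colour, that is, from the leaves downward, mirroring the inductive definition of the $k$-bounded coloring in \autoref{clr}. Fix $0<h<\omega$ and $k\in\omega$, and recall that there are only finitely many $(k,h)$-colors. Each color $c=\ang{l,\sigma}$ has a level component $l\in\set{-1,0,\dots,h}$. Its $\sigma$-component only involves colors of level $l+1$, and it is empty when $l=h$ or $l=-1$ (nodes at level $h$ have no successors inside $P_{\le h}$). So we induct downward on $l$, from $h$ to $0$, treating $l=-1$ separately. In every case we will produce an $\mc L_h$-formula $\theta_c(x)$, which lies in $\mc L_0$ as required.

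First, the base cases. If $l=-1$, take $\theta_c(x):=\bigwedge_{i\le h}\neg P_i(x)$. If $l=h$, the only color is $\ang{h,\emptyset}$, and we take $\theta_c(x):=P_h(x)$.

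Next, the inductive step. Suppose $0\le l<h$ and formulas $\theta_d$ are already available for every color $d$ of level $l+1$. Write $D_{l+1}$ for the finite set of such colors. Given $c=\ang{l,\sigma}$, we set
\[\theta_c(x):=P_l(x)\wedge\bigwedge_{\ang{d,m}\in\sigma}\chi_{d,m}(x)\wedge\bigwedge_{d\in D_{l+1},\,d\notin\operatorname{dom}\sigma}\neg\exists y\,(x<_l y\wedge\theta_d(y)).\]
Here $\operatorname{dom}\sigma$ is the set of first coordinates of pairs in $\sigma$. The formula $\chi_{d,m}(x)$ says that $x$ has exactly $m$ successors $y$ with $x<_l y$ and $\theta_d(y)$ when $m<k$, and at least $k$ such successors when $m=k$. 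Both statements are standard first-order counting formulas using $m+1$ (respectively $k$) distinct witnesses. The one degenerate case is $m=0$ with $k=0$, where "at least $0$" is trivially true; this is harmless.

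Correctness is then a routine induction. The successors of $x$ in the forest are exactly the $y$ with $x<_l y$, because $<$ restricted to level $l$ is $<_l$. By the induction hypothesis, $\theta_d(y)$ holds exactly when $\Lambda(y)=d$. Therefore $\theta_c(x)$ holds exactly when $x$ is at level $l$ and, for every color $d$, the capped count $\min(k,\cdot)$ of successors of color $d$ agrees with $\sigma$. This is precisely the condition $\Lambda(x)=c$ from \autoref{clr}.

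I do not expect a real obstacle. The only points needing care are two. First, the formulas must live in the finite language $\mc L_h$, where the color is computed relative to the $h$-forest reduct. Second, every successor of a level-$l$ node has level $l+1$. This follows from $<_l\sbeq P_l\times P_{l+1}$ in $T_0$, and it guarantees that the finitely many $\theta_d$ with $d\in D_{l+1}$ exhaust all the relevant successor colors.
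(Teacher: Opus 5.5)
Your induction on levels is the same argument as the paper's, whose proof is just ``by inducting on the definition of the coloring.'' For $k\ge 1$ your formulas are correct. However, you are wrong that the case $m=0$, $k=0$ is harmless, and the proposition quantifies over all $k\in\omega$, so this case matters.

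When $k=0$, a pair $\ang{d,0}$ lies in $\Lambda_\sigma(x)$ exactly when $x$ has at least one successor of color $d$. So the color records which successor colors occur. Your $\chi_{d,0}$ (``at least $0$ successors of color $d$'') is vacuous. Hence $\theta_c$ only says that every successor color of $x$ lies in $\operatorname{dom}\sigma$; it does not say that each color in $\operatorname{dom}\sigma$ actually occurs. Concretely, take $h=1$, $k=0$ and $c=\ang{0,\set{\ang{\ang{1,\emptyset},0}}}$. Your construction gives $\theta_c(x)=P_0(x)$, which an isolated root also satisfies, yet that root has color $\ang{0,\emptyset}$.

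The fix is immediate. For $\ang{d,m}\in\sigma$ with $m=k$, let $\chi_{d,m}$ say ``at least $\max(k,1)$ successors $y$ with $x<_l y\wedge\theta_d(y)$.'' Equivalently, add the conjunct $\exists y\,(x<_l y\wedge\theta_d(y))$ for every $d\in\operatorname{dom}\sigma$. For $k\ge 1$ this changes nothing, since every pair in $\sigma$ then has second coordinate at least $1$. With this change the rest of your correctness argument goes through unchanged.
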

\begin{proof}
  By inducting on the definition of the coloring.
\end{proof}

The description of types below follows directly from \autoref{tpDesc}.

\begin{prop}
Any $n$-type $p(\bar x)$ over $T$ is completely determined by: (1) Whether or not each $x_i$ is a constant; (2) For each $i$, the unique $n$ such that $x_i\in P_n$ (or the nonexistence thereof); (3) For each $k,h$, the $(k,h)$-color of each $x_i$ and ancestors; (4) For any $x_i,x_j\in\bar x$, the unique $(u,v)$ such that the $u$-th predecessor of $x_i$ is equal to the $v$-th predecessor of $x_j$ (or the nonexistence of $(u,v)$, i.e. they are on different trees).

\end{prop}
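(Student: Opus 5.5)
Suppose $\mc M,\mc N\sat T$ and tuples $\bar a\in\mc M$, $\bar b\in\mc N$ agree on the data (1)--(4). We show $\mathrm{tp}^{\mc M}(\bar a)=\mathrm{tp}^{\mc N}(\bar b)$. A single formula $\vp(\bar x)$ mentions only finitely many symbols. So it suffices, as in the proof of \autoref{complete}, to show $(\mc M,\bar a)\equiv^{EF}_{\mc L',n}(\mc N,\bar b)$ for every $n$ and every finite $\mc L'=\mc L_h\cup\bar c$, where $\bar c\sbeq C$ is finite and closed under predecessor; then apply \autoref{EF}. Fix $h$ and $n$, and put $k=n(h+1)$.

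\textbf{Step 1: close the tuples under predecessor.} Extend $\bar a$ to $\bar a^*$ by adding $\bar c^{\mc M}$ and all ancestors of each $a_i$ that lie in levels $\le h$. Define $\bar b^*$ in $\mc N$ the same way. Relative to $\mc L_h$, elements in levels $>h$ lie outside $P_{\le h}$, so only ancestors at levels $\le h$ matter. Datum (2) fixes each $a_i$'s level, hence which ancestors are added. Datum (4) fixes which ancestors coincide, both among the $a_i$ and with the predecessor of each ancestor. Thus the map $\bar a^*\mapsto\bar b^*$ on the added ancestors is well defined and bijective. It preserves the relations $P_l$ and $<_l$, since these are determined by levels and the predecessor relation.

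\textbf{Step 2: handle the constants.} Datum (1) tells whether each $a_i$ is a constant. Read as including which constant it equals, it pins down the $a_i$ that equal some $c\in\bar c$. The remaining subtlety is an ancestor of a non-constant $a_i$ that happens to be a constant. Two facts rule this out. By the last axiom of $T$, the tree of a constant in $C_i^j$ consists entirely of $C_i^j$. By \autoref{mdec}, $\mc M=\mc C\sqcup(\mc M\bsl\mc C)$. Hence every ancestor of a non-constant is a non-constant, and the ancestors of a constant are the constants fixed by $T$. So $\bar a^*\cup\bar c^{\mc M}\mapsto\bar b^*\cup\bar c^{\mc N}$ is a partial isomorphism that respects the constants.

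\textbf{Step 3: apply the game result.} Datum (3) says the map preserves $(k,h)$-colors on each $a_i$ and its ancestors. The constants have colors fixed by $T$, since their trees are specified up to isomorphism. Hence the assumptions of \autoref{tpDesc} hold for $\bar a^*$ and $\bar b^*$, and it gives $(\mc M,\bar a^*)\equiv^{EF}_{\mc L_h,n}(\mc N,\bar b^*)$. Because $\bar a^*$ contains both $\bar a$ and $\bar c^{\mc M}$, this yields the required game equivalence for $(\mc M,\bar a)$ over $\mc L_h\cup\bar c$.

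The main obstacle is Step 1: checking that the combinatorial data (2) and (4) really force the predecessor-closed extensions to be isomorphic. This is bookkeeping on ancestor indices. Step 2, the interaction with constants, needs the component-closure axiom of $T$ to ensure nothing outside $\mc C$ ever connects to a constant's tree.
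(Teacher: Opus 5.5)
Your proposal is correct and follows the paper's route. The paper's entire proof is that the characterization follows directly from \autoref{tpDesc}, and you supply the bookkeeping it omits: closing $\bar a$ under predecessors up to level $h$, using the component-closure axiom of $T$ to keep constants and non-constants apart, and reading (1) as specifying which constant.

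One caveat applies equally to the paper. Your parameter tuple $\bar a^*$ may already contain many same-colored siblings, so the counting in case (1) of \autoref{clrComp} actually needs $k\ge n(h+1)+|\bar a^*|$ rather than $k=n(h+1)$. This is harmless here, since datum (3) gives the colors for every $k$.
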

This characterization extends to types over a set $S$: only item (4) needs to take $S$ into additional consideration.

\begin{cor}\label{superstable}
$T$ is strictly superstable.
\end{cor}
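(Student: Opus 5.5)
We must show two things: $T$ is superstable, meaning it is stable and has at most $\lambda^{\ctb}$ types over any set of size $\lambda$ (equivalently, $|S(A)|\le|A|+2^{\ctb}$ for every $A$), and $T$ is not $\omega$-stable. Both follow by counting types with the preceding characterization proposition.

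\emph{Superstability.} Fix a parameter set $S$ in a model of $T$, with $|S|=\lambda\ge 2^{\ctb}$. We count $1$-types over $S$ using the extended characterization. Items (1)--(3) range over a set of size at most $2^{\ctb}$. Item (1) is countable data, since the element is a specific constant or not. Item (2) is a level in $\omega\cup\{\text{none}\}$. Item (3) is a countable sequence of colors drawn from finite sets, indexed by $k,h$ and by the at most countably many ancestors.

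Item (4) over $S$ is the only place where $S$ enters. It records, for each $s\in S$, the pair $(u,v)$ with $u$-th predecessor of $x$ equal to the $v$-th predecessor of $s$, or that none exists. The key observation is that this data is determined by a single element together with a level. Namely, it is determined by the \emph{lowest-level} ancestor of $x$ lying in $\operatorname{dcl}$ of $S$ (the set of ancestors of elements of $S$), together with the level of $x$. The ancestors of $x$ form a chain, so the common ancestors of $x$ with elements of $S$ are an initial segment of that chain, determined by its top element $a\in \operatorname{anc}(S)$. Since $|\operatorname{anc}(S)|\le\lambda\cdot\ctb$, there are at most $\lambda$ choices. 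Hence $|S_1(S)|\le\lambda\cdot 2^{\ctb}=\lambda$, which gives stability in every $\lambda=\lambda^{\ctb}$, i.e.\ superstability. The $n$-type case is analogous, or follows from the standard fact that bounding $1$-types suffices.

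\emph{Failure of $\omega$-stability.} It suffices to find $2^{\ctb}$ types over a countable set; we take the empty set. Here we use the completeness theorem's setting. Every $(k,h)$-color arises from some finite tree, and realizing different color sequences is consistent by compactness. Concretely, for each $f\in 2^\omega$ we want a type of a non-constant root $x$ whose $(2,h)$-colors record $f$. One way is to require that, for each $h$, $x$ has a child whose subtree is a path of length exactly $h$ iff $f(h)=1$. Each finite fragment of such a type is realized by a finite tree, which appears as some $\mc C_i^j$. By compactness, the fragment is satisfied by a non-constant element in an elementary extension, since the formula says $x\neq c$ for finitely many constants and infinitely many copies exist. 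Distinct $f$ give distinct types, so $|S_1(\emptyset)|=2^{\ctb}$ and $T$ is not $\omega$-stable.

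The main obstacle is the counting in item (4): we must justify rigorously that the dependence on $S$ collapses to one ancestor element plus a level. We must also make sure that items (1)--(3) really give only $2^{\ctb}$ possibilities rather than more. That holds because each is a countable sequence of elements from countable sets.
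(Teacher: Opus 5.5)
Your proof is correct and follows the paper's argument. You count $1$-types over $S$ via the type characterization, with item (4) collapsing to the highest ancestor of $x$ in the predecessor-closure of $S$; this is the precise form of the paper's parenthetical remark, and your ``lowest-level'' should read ``highest-level,'' as your ``top element'' indicates. You refute $\omega$-stability with continuum many pairwise incompatible partial types over $\emptyset$ that describe the shape of the tree above a root; the paper uses leaves at levels in $X$ instead of path-children.

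One wording fix: the bound $|S_1(S)|\le|S|+2^{\aleph_0}$ you actually prove gives stability in every $\lambda\ge 2^{\aleph_0}$, which is what superstability means for countable $T$. Stability in all $\lambda=\lambda^{\aleph_0}$, and likewise the $\lambda^{\aleph_0}$ bound in your opening sentence, only characterizes stability.
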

\begin{proof}
By our characterization of types above, there can be at most $\kappa+2^\ctb$ $1$-types $p(x)$ over a set $S$ of cardinality $\kappa$, so $T$ is superstable. (Note that item (4) is determined by the highest-level element in $S$ that shares the same tree with $x$.) On the other hand, for each $X\in 2^\omega$, consider the partial type $p_X(x)$ that says $x$ is the root of a tree which has a leaf on level $n$ if and only if $n\in X$. Clearly these are continuum many partial 1-types over $\varnothing$ and are pairwise incompatible, so $T$ is not $\omega$-stable.
\end{proof}

\section{Complicated Types}
Now we start working on complicated (complete) types. The idea is that the theory allows us to put complicated trees in the model, and having access to the root reveals the complexity. To do this more formally, we define the following stronger notion of continuous reducibility.

\begin{defn}
For two structures $\mc M\not\cong\mc N$ (in the same language), say $(\mathbf\Sigma^0_k,\mathbf\Pi^0_k)\le_c^*(\mc M,\mc N)$ if for any $X$ and any $\Sigma^0_k(X)$ set $A\sbeq 2^\omega$, there exists an $X$-computable reduction $f:2^\omega \to 2^\omega$ such that $x\in A\iff f(x)\cong\mc M$ and $x\notin A\iff f(x)\cong\mc N$, and the $X$-code for $f$ is uniformly computable from a $\Sigma^0_k(X)$-code for $A$.
\end{defn}

\begin{rmk}
If $(\mathbf\Sigma^0_k,\mathbf\Pi^0_k)\le_c^*(\mc M,\mc N)$, then $\set{X\in2^\omega\mid X\cong\mc M}$ is $\Sibz_k$-hard and $\set{X\in2^\omega\mid X\cong\mc N}$ is $\Pibz_k$-hard.
\end{rmk}

To build an unbounded type, we first construct, for every $k$, a tree $\mc M_k$ whose root satisfies a $\mathbf\Pi^0_k$-hard 1-type. Then we combine all of them into a single tree, starting with a tree with countably many nodes $G_k$ definable over the root and putting $\mc M_k$ above each $G_k$. To make sure $\Pi^0_k$-hardness transfers from $\mc M_k$ to the combined tree, we need formulas whose truth values depend only on trees above the free variables. This leads us to the ``local formulas'' defined below:

\begin{defn}
The collection of \emph{local formulas} is the smallest collection of $\mc L_0$-formulas that:
\begin{itemize}
  \item contains all atomic and negated atomic formulas; and
  \item is closed under \emph{local quantifications}, namely quantifiers of the form ${\forall y>_kx}$, ${\exists y>_kx}$, for any variable $x$ and $k\in\omega$.
\end{itemize}
\end{defn}

\begin{rmk}
As is common practice, we will also say a formula is local if it is logically equivalent to a local formula. Under this convention, the set of local formulas is closed under negation, conjunction, disjunction, and local quantification. 
\end{rmk}

When we combine the $\mathbf\Pi^0_k$-hard trees into a single tree, each constituent's root will never be on level 0. So we need the following definition of ``upshift'' for adapting local formulas to when the root is not necessarily on level 0.

\begin{defn}
\phantom{p}

\begin{itemize}
  \item If $\vp$ is a local formula and $k\in\omega$, then let $\vp^{+k}$, the \emph{$k$-th upshift of $\vp$}, be the formula obtained from $\vp$ by replacing every $P_i$ by $P_{i+k}$, and $<_i$ by $<_{i+k}$.

      Clearly, any upshift of a local formula remains local.
  \item If $\mc M\sat T_0$ with $m\in P^{\mc M}$, the \emph{tree above $m$}, denoted as $\mc M_{\ge m}$, is the $\mc L_0$-structure obtained by setting $m$ as the only root node and copying everything above $m$. More formally: say $m\in P_k^{\mc M}$.
      \begin{itemize}
        \item $M_{\ge m}=\set{x\in M\mid x>^{\mc M}m\vee x=m}$.
        \item For $x\in M_{\ge m}, x\in P_i^{\mc M_{\ge m}}\iff x\in P_{i+k}^{\mc M}$.
        \item For $x,y\in M_{\ge m}, x<^{\mc M_{\ge m}}_iy\iff x<_{i+k}^{\mc M}y$.
      \end{itemize}
  It follows that $\mc M_{\ge m}\sat T_0$ with its isomorphism type depending only on that of $(\mc M,m)$.
\end{itemize}
\end{defn}

Now we show that truth of local formulas is preserved ``locally,'' i.e. is determined by the tree above the free variables. The proposition generalizes to more than one variable, but the single-variable version suffices for us.

\begin{prop}
Suppose $\vp(x)$ is a local formula in one variable $x$, $\mc M_i$ are $\mc L_0$-structures, $m_i\in P_{k_i}^{\mc M_i}$ for $k_i\in\omega, i<2$. Suppose in addition that $\mc M_{0,\ge m_0}\cong \mc M_{1,\ge m_1}$, i.e. the trees above the two chosen elements are isomorphic. Then \[\mc M_0\sat\vp^{+k_0}(m_0)\iff\mc M_1\sat\vp^{+k_1}(m_1).\]
\end{prop}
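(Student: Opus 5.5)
The plan is to prove a stronger statement by induction on the construction of local formulas, allowing several free variables, all of which range over the tree above the chosen root. Concretely, I would show the following. Let $\vp(x,y_1,\ldots,y_r)$ be a local formula. Let $f:\mc M_{0,\ge m_0}\to\mc M_{1,\ge m_1}$ be an isomorphism; it sends $m_0$ to $m_1$, since each is the unique root. Then for all $b_1,\ldots,b_r\in M_{0,\ge m_0}$,
\[\mc M_0\sat\vp^{+k_0}(m_0,\bar b)\iff\mc M_1\sat\vp^{+k_1}(m_1,f(\bar b)).\]
The proposition is the case $r=0$. To keep the induction clean, I would restrict to local formulas in which every local quantifier $\forall y>_jz$ or $\exists y>_jz$ has $z$ among the variables already in play, so that all variables always denote elements of the tree above $m_0$. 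This is harmless here: $x$ is the only free variable, and every quantified variable is introduced above an earlier one.

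The first step is to fix what $y>_jz$ means, since the paper leaves it implicit. I read it as ``$y$ lies above $z$ at distance $j$'' (or ``$z<_j y$'' in the layered indexing). After upshifting by $k_i$, this becomes a relation that, in $\mc M_i$, relates only $z$ and elements of the tree above $z$. I would unfold it as a finite conjunction of the relations $<_{\ell+k_i}$ along a chain. By the definition of $\mc M_{\ge m}$, each $<_{\ell+k_i}^{\mc M_i}$ restricted to $M_{i,\ge m_i}$ is exactly $<_\ell^{\mc M_{i,\ge m_i}}$.

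The base case covers atomic and negated atomic formulas in the variables. Equality is preserved because $f$ is a bijection. For $P_\ell$ we have $\mc M_0\sat P_{\ell+k_0}(b)\iff \mc M_{0,\ge m_0}\sat P_\ell(b)\iff\mc M_{1,\ge m_1}\sat P_\ell(f(b))\iff\mc M_1\sat P_{\ell+k_1}(f(b))$, using the definition of the tree above $m$ at both ends and the fact that $f$ is an isomorphism in the middle. The same argument handles $<_\ell$. Negations, conjunctions and disjunctions are immediate.

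The inductive step for a local quantifier is the main point, though I expect it to be routine. Take $\exists y>_j z\,\psi$ with $z$ among the variables. A witness $y\in M_0$ lies above $z$, and $z$ lies in $M_{0,\ge m_0}$. I need to check that this forces $y\in M_{0,\ge m_0}$, which holds because the tree order is transitive, so every element above $z$ is also above $m_0$. Then $f(y)$ is above $f(z)$ at the corresponding distance, since $f$ preserves the upshifted relations as in the base case, and the induction hypothesis transfers $\psi$. The converse direction uses $f^{-1}$, and $\forall$ follows by duality. The one delicate point is this observation that local quantifiers never leave the tree above $m_i$. That is exactly what locality buys, and it is the step to verify carefully against the precise reading of $>_j$.
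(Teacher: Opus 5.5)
Your proposal is correct and follows essentially the paper's argument. The paper splits the proof into two steps: it first shows $\mc M_i\sat\vp^{+k_i}(m_i)\iff\mc M_{i,\ge m_i}\sat\vp(m_i)$, by relativizing $\vp^{+k_i}$ to $M_{i,\ge m_i}$ (which does not change it, by locality) and unwinding the level shift, and then it uses that the isomorphism sends $m_0$ to $m_1$; you instead fold the isomorphism into a single induction with parameters, but the key point is the same one you isolate: local quantifiers never leave the tree above $m_i$, and on that tree $P_{\ell+k_i},<_{\ell+k_i}$ of $\mc M_i$ coincide with $P_\ell,<_\ell$ of $\mc M_{i,\ge m_i}$.
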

\begin{proof}
  Note that the isomorphism $\mc M_{0,\ge m_0}\cong \mc M_{1,\ge m_1}$ has to send $m_0$ to $m_1$ since it preserves the unique root. Hence, it suffices to show that (for $i<2$)
  \[\mc M_i\sat\vp^{+k_i}(m_i)\iff\mc M_{i,\ge m_i}\sat\vp(m_i).\]
  This follows by first relativizing (in the sense of Theorem 5.1.1 of \cite{Hodges}) $\vp^{+k_i}$ to the tree above $m_i$ in $\mc M_i$, noticing that $\vp^{+k_i}$ is invariant under this relativization (by locality); and then chasing through the definition of the tree above an element.
\end{proof}

Now we begin constructing the $\mathbf\Pi^0_k$-hard trees.

\begin{prop}\label{PairOfTrees}
Uniformly in $k\in\omega, k\ge 1$, we can build computable trees $\mc M_k, \mc N_k\sat T_0$ and a local formula $\vp_k(x)$ such that $(\mathbf\Sigma^0_k,\mathbf\Pi^0_k)\le_c^*(\mc M_k,\mc N_k)$, and $\mc M_k\sat\vp_k(*), \mc N_k\sat\neg\vp_k(*)$ where $*$ is the unique (definable) root node (so $\mc M_k\not\equiv\mc N_k$ witnessed by $\vp_k(*)$, in particular $\mc M_k\not\cong\mc N_k$).
\end{prop}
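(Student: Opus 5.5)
We argue by induction on $k$, building the pair $(\mc M_k,\mc N_k)$ together with $\vp_k$ so that the isomorphism types are distinguished by $\vp_k$ evaluated at the root. We use the standard back-and-forth pairs for trees, in the style of Ash--Knight. For the reduction we rely on the normal form of $\Sigma^0_k(X)$ sets: $x\in A$ iff $\exists s\,(x\in B_s)$, where each $B_s$ is $\Pi^0_{k-1}(X)$ uniformly in $s$. Dually, for $k=1$ we have $x\in A$ iff some finite stage of an $X$-computable search halts.

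\emph{Base case $k=1$.} Let $\mc M_1$ be a root with one child (a tree of height $1$) and let $\mc N_1$ be a bare root. Put $\vp_1(x):=\exists y>_0x\,(y=y)$, which is local. Given $x$ and the $\Sigma^0_1(X)$ code for $A$, the reduction $f(x)$ builds a root, and adds a single child at the first stage at which the $X$-computable search witnesses $x\in A$. This $f$ is $X$-computable, uniformly in the code. Then $f(x)\cong\mc M_1$ iff $x\in A$, and $f(x)\cong\mc N_1$ otherwise.

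\emph{Inductive step from $k$ to $k+1$.} Take
\[\mc N_{k+1}:=\text{a root with $\omega$ children, each carrying a copy of }\mc N_k,\]
and let $\mc M_{k+1}$ be the same tree except that exactly one child carries $\mc M_k$ instead. Wait: we need to check which side is $\Sigma$. Membership $x\in A$ with $A\in\Sigma^0_{k+1}$ means $\exists s\,(x\in B_s)$ with $B_s\in\Pi^0_k$, that is, with the complement of $B_s$ in $\Sigma^0_k$. We therefore use children with the roles swapped. Set $\mc M_{k+1}$ to be the root with $\omega$ children carrying $\mc M_k$ plus at least one child carrying $\mc N_k$. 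Set $\mc N_{k+1}$ to be the root with $\omega$ children all carrying $\mc M_k$. The reduction $f(x)$ attaches, for each $s$, a child whose subtree is $g_s(x)$. Here $g_s$ is the level-$k$ reduction applied to $2^\omega\setminus B_s$ (a $\Sigma^0_k$ set), yielding $\mc M_k$ if $x\notin B_s$ and $\mc N_k$ if $x\in B_s$. It also attaches $\omega$ extra children carrying computable copies of $\mc M_k$. Then $f(x)\cong\mc M_{k+1}$ iff some $s$ has $x\in B_s$, iff $x\in A$. The remaining problem is the \emph{number} of $\mc N_k$-children, which may vary; we fix this by letting $\mc M_{k+1}$ have $\omega$ children of each kind. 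For this we index over pairs $(s,t)$ and attach the $s$-th subtree $\omega$ times, which makes the count always $0$ or $\omega$. Uniformity of $f$ comes from the uniformity in the inductive hypothesis and the fact that a code for $B_s$ is computable from the code for $A$. To get computable copies, note that the construction is effective and all levels are explicitly named, since a child of the root sits in $P_1$. The subtrees $g_s(x)$ are built with upshifted levels, that is, their levels are relabelled by $+1$.

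\emph{The formula.} Set $\vp_{k+1}(x):=\exists y>_0x\,\neg\vp_k^{+1}(y)$. This is local, since local formulas are closed under negation, upshift and local quantification. By the preceding proposition on locality, $\vp_k^{+1}(y)$ for a child $y$ depends only on the tree above $y$, which is isomorphic to $\mc M_k$ or $\mc N_k$ rooted at level $0$. Hence $\mc M_{k+1}\sat\vp_{k+1}(*)$ because an $\mc N_k$-child exists, and $\mc N_{k+1}\sat\neg\vp_{k+1}(*)$ because every child satisfies $\vp_k$. In particular $\mc M_{k+1}\not\cong\mc N_{k+1}$.

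The main obstacle is the bookkeeping in the inductive step. First, the counts of each kind of child must be made invariant, which the padding to $\omega$ copies handles. Second, the reduction must output a structure with domain $\omega$ built effectively, even though the subtrees are themselves produced by limiting processes. Since each $g_s$ is a total $X$-computable map producing a structure, we simply dovetail the constructions of all $g_s(x)$ and interleave their domains into $\omega$.
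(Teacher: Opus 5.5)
Your proof is correct and is essentially the paper's. Your final trees are exactly the Hirschfeldt--White back-and-forth trees $\mathcal E_k,\mathcal A_k$ that the paper cites: $\mathcal M_{k+1}$ is a root with $\omega$ copies each of $\mathcal M_k$ and $\mathcal N_k$, and $\mathcal N_{k+1}$ is a root with $\omega$ copies of $\mathcal M_k$. Your reduction and $\varphi_{k+1}$ spell out the cited Proposition~3.2 and Lemma~3.3(2); the only deviation, $\mathcal M_1$ as a root with one child instead of infinitely many leaves, is harmless. In the write-up, delete the discarded first attempt and the intermediate ``at least one $\mathcal N_k$-child'' version so that only the $\omega$-of-each definition remains.
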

\begin{proof}
  We take $\mc M_k, \mc N_k$ to be the back-and-forth trees $\mc E_k, \mc A_k$ from \cite[Definition 3.1]{hirschfeldt_realizing_2002}, respectively, except that to work in $\tilde{\mc L}$ we must use $<_n$ to replace the directed edges for appropriate values of $n$. This is possible with all uniformity, since we know the level of each node during the construction: The idea there is to start with a single node as $\mc A_1$ and a single root with infinitely leaf successors as $\mc E_1$; Then inductively, let $\mc A_{k+1}$ be a single root with infinitely many $\mc E_k$'s above, and $\mc E_{k+1}$ be a single root with infinitely many $\mc A_k$'s and $\mc E_k$'s above.

  That $(\mathbf\Sigma^0_k,\mathbf\Pi^0_k)\le_c^*(\mc M_k,\mc N_k)$ uniformly in $k$ follows by relativizing \cite[Proposition 3.2]{hirschfeldt_realizing_2002} to any oracle $X$. Again, the idea there is to build the reductions inductively.

  The formulas $\vp_k(*)$ can be found by adapting \cite[Lemma 3.3(2)]{csima_degrees_2020} to our language $\tilde{\mc L}$, noting the resulting formulas can be made local since the quantifiers appearing in the proof there can be replaced by local ones (uniformly computably). Essentially, these formulas are obtained by writing down the (inductive) definitions of $\mc E_k$ and $\mc A_k$.
\end{proof}

Now we are finally ready to build an unbounded type.
\begin{thm}\label{unbounded}
There is a complete type $p(x)\in S_1(T)$ which is $\Pibz_\omega$-hard. In particular, it is not $\forall_n$-axiom\-atizable for any finite $n$.
\end{thm}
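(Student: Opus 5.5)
We build a single countable model $\mc M\sat T$ together with a root $r$ whose tree combines all the $\mc M_k$ from \autoref{PairOfTrees}, and take $p(x)=\mathrm{tp}^{\mc M}(r)$. Concretely, let $\mc R$ be the tree with root $r$ having infinitely many successors, and for each $k\ge1$ fix a distinguished node $g_k$ at level $k$: a chain $r<g'_1<\dots$, with $g_k$ a new successor of the level-$(k-1)$ chain node, say. Above $g_k$ we place a copy of $\mc M_k$ shifted up by $k$ levels. We must make $g_k$ definable from $r$ by a local formula $\theta_k(x,y)$ uniformly in $k$. The simplest device I would try is to take $g_k$ to be the unique node at level $k$ above $r$ whose subtree has a leaf at level $k+1$ (an extra leaf child attached only to $g_k$), with the $\mc M_k$ and $\mc N_k$ chosen to have no leaves at level $1$. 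If that fails, any locally definable marker pattern will do. We then let $\mc M=\mc C\sqcup\mc R'$, where $\mc R'$ is this tree with root $r$; by \autoref{mdec} this is a model of $T$, since adding a disjoint tree preserves all axioms.

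For hardness we reduce an arbitrary $\Pibz_\omega$-complete set, say $B=\set{X:\forall k\,(X\restr_k\in A_k)}$ with $A_k$ uniformly $\Pibz_k$, to $\mathrm{Mod}(p)$ (viewed as models of $T\cup p(a)$ for a new constant $a$). Given $X$, we use the reductions $f_k$ from $(\mathbf\Sigma^0_k,\mathbf\Pi^0_k)\le_c^*(\mc M_k,\mc N_k)$, uniform in $k$, to build continuously a structure $F(X)$: a copy of $\mc C$, plus a tree rooted at $a$ with the marker skeleton, with $f_k(X)$ placed above $g_k$. Thus $X\in A_k$ yields $\mc M_k$ above $g_k$, and $X\notin A_k$ yields $\mc N_k$. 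The map $F$ is continuous, in fact computable relative to the codes, because everything is uniform. Now $F(X)\sat T$ always. If $X\in B$, then $(F(X),a)\cong(\mc M,r)$, so $a$ realizes $p$. If $X\notin B$, pick $k$ with $\mc N_k$ above $g_k$. By the locality proposition, $F(X)\sat\exists y\,(\theta_k(a,y)\wedge\neg\vp_k^{+k}(y))$, while $\mc M$ satisfies the corresponding positive statement at $r$, using uniqueness of $g_k$. Hence $a$ does not realize $p$. This gives $\Pibz_\omega$-hardness, and since by \cite{AGLRZ} a $\forall_n$-axiomatizable type has $\Pibz_n$ model set, which cannot be $\Pibz_\omega$-hard, the second claim follows.

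The main obstacle is the definability bookkeeping. We need $\theta_k$ to be local, so that the formula is decided inside the tree above $r$ and $a$. We also need the markers not to be accidentally imitated inside the $\mc M_k$ and $\mc N_k$ copies, nor in the other branches, so that $g_k$ is genuinely unique in both $\mc M$ and every $F(X)$. Getting a uniform choice that keeps the $f_k$ outputs unaffected is where I expect to spend the effort. If level-$(k+1)$ leaves are problematic, I would instead give $g_k$ exactly $k+2$ leaf children, which the $\mc M_k$ and $\mc N_k$ nodes never have since they have $0$ or infinitely many successors of each type.
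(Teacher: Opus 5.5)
Your overall plan matches the paper's. You put a single tree next to a copy of $\mc C$, take $p$ to be the type of its root, splice the reductions $f_k$ in above nodes definable from the root, and detect failure through locality of $\vp_k$. The gap is in the step you flagged, and it is more than bookkeeping.

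\textbf{The markers sit inside the tree above $g_k$.} Both of your markers are attached to $g_k$ itself. So the tree above $g_k$ is $f_k(X)$ with marker leaves added at its root, not $f_k(X)$. The locality proposition then only evaluates $\vp_k^{+k}(g_k)$ on this modified tree, whereas \autoref{PairOfTrees} only guarantees that $\vp_k$ separates $\mc M_k$ from $\mc N_k$ themselves. For instance, $\mc N_1$ is a single point and $\mc M_1$ is a root with infinitely many leaves. After marker leaves are added, the formula ``the root has a successor'', which separates $\mc M_1$ from $\mc N_1$, holds in both.

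\textbf{Uniqueness fails for the trees you are actually given.} With the leaf-child marker, the copy of $\mc M_2=\mc E_2$ above $g_2$ contains level-$3$ nodes (the roots of its $\mc E_1$-components) with leaf children at level $4$, so $g_3$ is not unique. In general the trees above $g_j$, $j<k$, have leaves at many levels. Your hypothesis ``no leaves at level $1$'' is false for $\mc E_1$ and $\mc E_2$, which are handed to you by \autoref{PairOfTrees}, and it is not the relevant condition anyway. With the counting marker, the roots of $\mc E_1$ and $\mc E_2$ already have infinitely many leaf successors. So in $\mc M$ the nodes $g_1,g_2$ do not have exactly $k+2$ leaf children, and in $F(X)$ the count at $g_k$ depends on $X$.

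\textbf{How the paper avoids this.} It places the marker beside the attachment point rather than on it, and gives each $i$ its own branch. From the root $F_0$ a chain rises to level $2i-1$, and its top node has exactly two children: a leaf $F_i$ and the node $G_i$ carrying $\mc M_i$ (resp.\ $f_i(y)$). Then $F_i$ is the unique leaf at level $2i$ whose only common ancestor with each $F_j$, $j<i$, is $F_0$. Leaves in a branch $j<i$ share the level-$(2j-1)$ node with $F_j$, and branches $j>i$ have only non-leaf chain nodes at level $2i$. Finally $G_i$ is the other child of $F_i$'s parent. So the tree above $G_i$ is exactly $f_i(y)$, and this definition is unaffected by what sits above the other $G_j$. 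Your chain design can be repaired along similar lines, but the marker must lie outside the tree above $g_k$ and be unique independently of the $f_j(X)$.

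\textbf{Orientation.} Your orientation is reversed. By the definition of $\le_c^*$, $\Sigma^0_k$ sets are sent to $\mc M_k$, so for $A_k$ in $\Pi^0_k$ you get $X\in A_k$ iff $f_k(X)\cong\mc N_k$. To fix this, either write the $\Pibz_\omega$ set as $\bigcap_k S_k$ with $S_k$ uniformly $\Sigma^0_k$, as the paper does (note $\Pi^0_k\subseteq\Sigma^0_{k+1}$), or build $p$ from the $\mc N_k$'s and detect failure with $\vp_k$. Also, $X\restr_k$ must mean a column of $X$; read as an initial segment it would make $B$ closed.

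With these repairs your argument becomes the paper's, and the final step deducing non-$\forall_n$-axiomatizability is fine.
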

\begin{proof}
  We combine the trees $\mc M_i$ from \autoref{PairOfTrees} into a single tree with root $x$, in a way that the root of each $\mc M_i$ is definable from $x$, and then take the complete type of $x$. (We will see at the end why this suffices.)

  The above can be done, for example, by the following: Let $\mc N\sat T_0$ be the tree defined by: (See Figure \ref{fig:N}.)
  \begin{itemize}
    \item There is a unique root, denoted by $F_0$.
    \item (Having defined all of $F_j$ for $j<i$:) There is a unique $y\in P_{2i}$ which is a leaf and whose only common ancestor with $F_j$ is $F_0$, for all $j<i$. (Call this $y$ $F_i$.)
    \item For $i>0$, there is a unique $z\in P_{2i}$ that is not $F_i$ but has the same predecessor as $F_i$. (Call this $z$ $G_i$.)
    \item The tree above $G_i$ is $\mc M_i$.
  \end{itemize}

\begin{figure}[htbp]
  \centering
  \scalebox{1}[1]{%
  \begin{tikzpicture}[style={sibling distance=35pt,level distance=30pt},grow'=up]
  \node [fill,circle,inner sep=2pt,label=below:{$F_0$}] {}
    child {[fill] circle (2pt)
      child {node [circle, fill, label=below left:{$F_1$}, inner sep=2pt]{}}
      child {node [circle, draw, label=below:{$G_1$}] {$\mc M_1$}}}
    child {[fill] circle (2pt)
      child {[fill] circle (2pt)
        child {[fill] circle (2pt)
          child {node [circle, fill, label=below left:{$F_2$}, inner sep=2pt]{}}
          child {node [circle, draw, label=below:{$G_2$}] {$\mc M_2$}}}}}
    child {node {\ldots}};
\end{tikzpicture}
}%
  \caption{$\mc N$}
  \label{fig:N}
\end{figure}

  Let $\mc M$ be the $\tilde{\mc L}$-structure obtained from $\mc C\sqcup\mc N$ (i.e. the $\mc L_0$-structure is $\mc C\sqcup\mc N$ and the constants are from $\mc C$). Clearly, $\mc M\sat T$. Let $p(x)$ be the 1-type of the root node of the $\mc N$-part in $\mc M$.

  We claim $p(x)$ is $\Pi^0_{\omega}$-hard: Take any $\mathbf\Pi^0_\omega$ set $S$, which we may assume is $\Pi^0_\omega(X)$ for some $X$. Write it as $\bigcap_{0<i<\omega}S_i$ where $S_i$ is uniformly $\Sigma^0_i(X)$. By \autoref{PairOfTrees}, we can uniformly find $X$-computable functionals $f_i$ witnessing $(S_i,\bar S_i)\le_c(\mc M_i,\mc N_i)$. Now our $X$-computable reduction $f$ from $S$ to $\Mod(p(a))$ (where $a$ is a new constant) does the following: Given input $y$, $f(y)$ is the following model:
  \begin{itemize}
    \item Add a copy of $\mc C$ (which can be done computably).
    \item Disjoint from $\mc C$, build a new tree with root $a$ by following the instructions above for building $\mc N$, except that build the tree above $G_i$ using $f_i(y)$ (instead of $\mc M_i$).
  \end{itemize}
  Clearly, if $y\in S$ (i.e. $y$ is in every $S_i$) then $f(y)\cong\mc N$ (because each $f_i(y)$ is actually $\mc M_i$). Otherwise, there is some $i$ such that $f_i(y)$ is $\mc N_i$, in particular satisfies $\neg\vp_i(*)$. Using the definability of $G_i$ from $x$ (in the definition of $\mc N$), we see that $f(y)\sat\neg\vp_i^{+2i}(G_i(a))$, while $\vp_i^{+2i}(G_i(x))\in p(x)$ (where $G_i(x)$ is a formula defining $G_i$ from $x$). Hence $f(y)\not\sat p(a)$, so we are done.
\end{proof}

\begin{rmk}
  $p(x)$ is also unbounded over $T$ since $T$ is itself bounded.
\end{rmk}

Combining everything together, we finally obtain our main result.

\begin{thm}\label{bddUnbdd}
There is a complete theory $T$ which is bounded but has unbounded types. In addition, it is strictly superstable.
\end{thm}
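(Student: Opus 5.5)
The proof will assemble results already established in the paper; no new construction is needed. I will check the four required properties of the theory $T$ from \autoref{theoryDef} in turn.

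\emph{Completeness.} This is exactly \autoref{complete}. It was obtained from \autoref{tpDesc} together with \autoref{EF}, by passing to the finite sublanguages $\mc L_h\cup\bar c$.

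\emph{Boundedness.} I will inspect the axioms of \autoref{theoryDef} directly.
\begin{itemize}
\item The axioms of $T_0$ are universal, except for existence of predecessors, which is $\forall_2$.
\item Pairwise distinctness of the constants is quantifier-free.
\item The isomorphism type of each $\mc C_i^j$ is described by finitely many atomic and negated atomic sentences in the constants.
\item The closure axiom ``if $x\in C_i^j$ and $x<_k y\vee y<_kx$ then $y\in C_i^j$'' is $\forall_1$, reading ``$x\in C_i^j$'' as the finite disjunction $\bigvee_{c\in C_i^j}x=c$.
\end{itemize}
Hence $T$ is $\forall_2$-axiomatizable, and in particular bounded. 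The stronger $\forall_1$ claim in the abstract is not needed for the statement as given, so I leave it aside. If one wanted it, I would first try to show that the universal consequences of $T$ already imply $T$, using the completeness of $T$ and the fact that $\mc C$ embeds into every model (\autoref{minmdl}). I expect this to be the most delicate point, because the predecessor-existence axiom is genuinely $\forall_2$ in form.

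\emph{Unbounded types.} \autoref{unbounded} provides a complete type $p(x)\in S_1(T)$ that is $\Pibz_\omega$-hard. By the result of \cite{AGLRZ} recalled in the introduction, a $\forall_n$-axiomatizable type has a $\Pibz_n$ set of models, which cannot be $\Pibz_\omega$-hard. So $p$ is not $\forall_n$-axiomatizable for any $n$, i.e.\ it is unbounded. As the remark following \autoref{unbounded} notes, this remains true relative to $T$, since $T$ itself is bounded.

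\emph{Strict superstability.} This is \autoref{superstable}, which counts types using the color characterization.

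Combining these four points gives the theorem. The substantive work was already done in \autoref{unbounded} and \autoref{complete}, so the only step that really needs care here is checking the quantifier complexity of each axiom.
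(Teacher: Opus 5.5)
Your proposal is correct and is essentially the paper's proof: the paper likewise reads off the $\forall_2$ axiomatization from \autoref{theoryDef} and cites \autoref{unbounded}, \autoref{complete}, and \autoref{superstable}. Your aside on the $\forall_1$ version would not work as sketched. In $\tilde{\mc L}$, a substructure consisting of $\mc C$ and a single non-constant level-$1$ node violates existence of predecessors, so $T$ is not closed under substructures and cannot be $\forall_1$-axiomatizable there. The paper instead gets $\forall_1$ by replacing $<_n$ with predecessor function symbols $\operatorname{Pred}_n$.
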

\begin{proof}
  The theory $T$ we constructed has a $\forall_2$ axiom\-atization by definition: see \autoref{theoryDef} and the comments immediately after. It has an unbounded type by \autoref{unbounded}, is complete by \autoref{complete}, and is strictly superstable by \autoref{superstable}.
\end{proof}

\begin{cor}
  The theory above can be taken to be satisfy either of the following:
   \begin{itemize}
     \item $\forall_1$-axiom\-atizable.
     \item $\forall_2$-axiom\-atizable in a relational language.
   \end{itemize}
\end{cor}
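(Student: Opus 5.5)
For the $\forall_1$ version, I will replace the only $\forall_2$ axiom of $T$ by a universal one using function symbols. The one non-universal axiom of $T_0$ is existence of predecessors, $\forall x\in P_{i+1}\,\exists y\in P_i\,(y<_ix)$. I will add unary function symbols $p_i$ for each $i$, with universal axioms $\forall x\,(P_{i+1}(x)\to (P_i(p_i(x))\wedge p_i(x)<_ix))$. The universal uniqueness-of-predecessors axiom then gives $\forall x\forall y\,(y<_ix\to y=p_i(x))$. To keep $p_i$ harmless off $P_{i+1}$, I will set $p_i(x)=x$ for $x\notin P_{i+1}$, which is the universal axiom $\forall x\,(\neg P_{i+1}(x)\to p_i(x)=x)$. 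The remaining axioms of $T$ are all universal or quantifier-free: disjointness, $<_i\subseteq P_i\times P_{i+1}$, distinctness of constants, the atomic diagram of each $C^j_i$, and closure of $C^j_i$ under $<_k$-neighbours, which reads $\forall y\,(c<_ky\vee y<_kc\to\bigvee_{d\in C^j_i}y=d)$. So the new theory $T'$ is $\forall_1$.

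The key point is that every model of $T$ expands uniquely to a model of $T'$, and every model of $T'$ is such an expansion, because the $p_i$ are definable by $\mc L_0$-formulas relative to $T$ and the axioms of $T'$ force that definition. Hence $T'$ is a definitional expansion of $T$, so it is complete. Its types are the types of $T$ transported along the definitions: $S_1(T')$ is in bijection with $S_1(T)$, each formula of $T'$ being equivalent to an $\tilde{\mc L}$-formula and conversely.

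Unboundedness of the type $p(x)$ from \autoref{unbounded} needs a little care, since quantifier complexity can change under definitional expansion. I will argue via descriptive complexity. $\Mod(p'(a))$ continuously reduces $\Mod(p(a))$: take a model of $p(a)$ and compute its expansion, where $p_i(x)$ is found by a search that succeeds because $T_0$ guarantees a predecessor. This map is continuous, and computable relative to the given diagram. Therefore $\Pibz_\omega$-hardness of $p$ transfers to $p'$. By the result of \cite{AGLRZ}, $p'$ is then not $\forall_n$-axiomatizable for any $n$. Alternatively, I can rerun the proof of \autoref{unbounded} directly, with $f(y)$ outputting the expanded structure. Superstability is preserved under definitional expansion.

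For the relational $\forall_2$ version, I will replace each constant $c\in C$ by a unary predicate $Q_c$. I add the axioms $\exists x\,Q_c(x)$ and $\forall x\forall y\,(Q_c(x)\wedge Q_c(y)\to x=y)$; these are $\exists_1$ and $\forall_1$, hence $\forall_2$. Each remaining axiom mentioning $c$ is rewritten as $\forall x\,(Q_c(x)\to\cdots)$, which keeps it $\forall_2$: the predecessor-existence axiom stays $\forall_2$, and the diagram and closure axioms become $\forall_1$. This is again a definitional (bi-interpretable, same-universe) variant, so completeness, superstability, and the $\Pibz_\omega$-hardness of the corresponding type transfer exactly as above.

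The main obstacle I expect is the transfer of unboundedness of the type. Syntactic complexity is not invariant under the translation, so I will argue through the $\Pibz_\omega$-hardness characterization rather than through formula counting. The required continuous reductions are routine to compute from the diagrams.
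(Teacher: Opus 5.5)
Your proof is correct and uses the paper's two constructions: predecessor function symbols for the $\forall_1$ version, and unary predicates $Q_c$ in place of the constants for the relational $\forall_2$ version. You add $p_i$ alongside $<_i$ rather than replacing $<_i$ by $\operatorname{Pred}_i$, which makes no difference. You genuinely differ from the paper only in how you transfer unboundedness of the type.

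The paper just says the new theories are bi-interpretable with $T$, so boundedness is preserved. This is justified here because the translations are quantifier-free up to the graphs of the new symbols, and those graphs are quantifier-free definable in $\tilde{\mc L}$. So for $n\ge 1$, a $\forall_n$-axiomatization of the new type would translate into a $\forall_n$-axiomatization of $p$ modulo the $\forall_2$ theory $T$.

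Your caution is well founded, since definitional equivalence alone does not preserve unboundedness. Morleyization, for instance, is a definitional expansion in which every type becomes $\forall_2$-axiomatizable. Your descriptive-complexity route avoids syntax altogether and gives the stronger conclusion that the new type is itself $\Pibz_\omega$-hard. You need only the easy direction ($\forall_n$-axiomatizable implies $\Pibz_n$), which is immediate even with function symbols, not the full result of \cite{AGLRZ}.

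One point to tighten: the expansion map that searches for predecessors is total only on models of $T_0$, so by itself it is not a continuous reduction on all of $2^\omega$. Either compose it with the paper's reduction $f$, all of whose outputs are models of $T$, or, as you suggest, have $f(y)$ output the expanded structure directly. The relational version has no such issue, since $Q_c$ is read off the diagram from the atomic sentences $c=m$.
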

\begin{proof}
  For a $\forall_1$ axiom\-atization, we can use function symbols $\operatorname{Pred}_n$ for predecessors to replace $<_n$, reformulating the theory correspondingly.

  For a relational $\forall_2$ axiom\-atization, we can similarly use unary predicates to replace all the constants.

  In both cases we get theories bi-interpretable with the one given in \autoref{theoryDef}, so boundedness and stability properties are preserved.
\end{proof}

\section{Open Questions}
There are several possible ways to strengthen the main result. First, one can examine the stability hierarchy. As mentioned in the introduction section, the existence of a bounded theory with unbounded types prevents one from showing the $\omega$-Vaught's Conjecture for $\omega$-stable theories. However, if such behaviors cannot occur in $\omega$-stable theories, then the proof would go through. 

\begin{quest}
Is there a complete bounded $\omega$-stable theory with unbounded types?
\end{quest}

Second, the model we construct has continuum many countable models (since there are continuum many types over $\emptyset$). In view of problems related to the Vaught's Conjecture, we would like to know if such behaviors occur when there are countably many countable models as well.

\begin{quest}
Is there a complete bounded theory with unbounded types having only countably many countable models?
\end{quest}

In addition, while the theory we have is $\forall_1$-axiom\-atizable, to make it relational we would end up with a $\forall_2$-axiom\-atization, which we do not know is optimal or not for a relational theory. We can rule out $\forall_1$ or $\exists_1$ theories: All complete relational $\forall_1$ theories have the empty set as a model (thus trivial); All complete relational $\exists_1$ theories are too simple as well:

\begin{prop}
  Any complete relational $\exists_1$ theory $T$ can have no relations of arity at least 2 in the language. In particular, they cannot have unbounded types.
\end{prop}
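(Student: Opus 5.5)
The plan is to argue by contradiction: suppose $T$ is a complete $\exists_1$ theory in a relational language $\mc L$ containing a relation symbol $R$ of arity $r\ge2$, and derive an inconsistency with completeness. The key fact is that $\exists_1$ sentences are preserved upward under extensions, in particular under disjoint unions (for relational languages, any structure embeds into its disjoint union with another, and so does any model of $T$). Since $T$ is axiomatized by $\exists_1$ sentences, any structure containing a model of $T$ as a substructure is again a model of $T$. So $\Mod(T)$ is closed under arbitrary superstructures.

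Next I will exploit completeness. Fix a model $\mc A\sat T$. Build two superstructures of $\mc A$: $\mc A_0=\mc A\sqcup\set{u,v}$ where on the new elements $u\ne v$ all relations are false, and $\mc A_1=\mc A\sqcup\set{u,v}$ where on the new elements every relation is true on every tuple of entries from $\set{u,v}$, with all mixed tuples (containing both old and new elements) false. Both are models of $T$ by the previous paragraph, so by completeness they satisfy the same sentences. Now I need a sentence separating them. The natural candidate is $\forall x\forall y\,(x\ne y\to \neg R(x,y,y,\ldots,y))$ or its negation; but $\mc A$ itself might already contain such a pair, so I instead modify the construction: take $\mc A_1$ to be $\mc A$ with \emph{every} tuple of distinct-or-not elements put into all relations? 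That changes $\mc A$, not allowed. Better: compare $\mc A'$, an arbitrary model, with the superstructure $\mc B$ obtained by adjoining two new points $u\ne v$ with $R(u,v,\ldots,v)$ true and $R(v,u,\ldots,u)$ false, and separately $\mc B'$ adjoining with a configuration making $\exists x\exists y(R(x,y,\ldots)\wedge\ldots)$ hold; since any $\exists_1$ property realized by finitely many new points can be forced, every $\exists_1$ sentence consistent with relational structures must be in $T$ (as $T$ is complete and each such sentence holds in some superstructure of $\mc A$, so $T\not\vdash$ its negation). In particular both $\exists x\exists y\,(x\ne y\wedge R(x,y,\ldots,y))$ and $\exists x\exists y\,(x\ne y\wedge\neg R(x,y,\ldots,y))$ belong to $T$, and more generally all finite partial configurations are realized. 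The contradiction should come from a $\forall$-sentence: completeness forces $T$ to decide $\forall x\forall y\,R(x,y,\ldots,y)$; it fails in $\mc A\sqcup\set{u,v}$ with relations false, so $T\vdash\exists x\exists y\,\neg R(x,y,\ldots,y)$; that is fine. So I must pick a sentence whose truth value can be flipped in both directions by extension — e.g. $\psi:=\forall x\exists y\,R(x,y,\ldots,y)$ versus a universal property. Adjoining a new point $w$ with no relations makes $\psi$ false; adjoining, for each old $x$, a witness $y_x$ with $R(x,y_x,\ldots,y_x)$ (an infinite superstructure, still a model since it extends $\mc A$) makes $\psi$ true. Here arity $\ge2$ is exactly what lets a new element witness a relation with an \emph{old} element, which is impossible for unary relations. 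This gives two models of $T$ disagreeing on $\psi$, contradicting completeness.

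For the ``in particular'', with only unary relations (and possibly $0$-ary), a structure is determined up to $\equiv$ by the counts (capped) of elements in each Boolean atom of the unary predicates in each finite sublanguage, and every type is isolated within each finite sublanguage by a quantifier-free formula; hence every type is axiomatized by quantifier-free formulas together with $T$, so it is bounded. The main obstacle is choosing the separating sentence correctly; the $\forall\exists$ sentence $\psi$ with new witnesses is my first attempt.
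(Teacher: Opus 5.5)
Your strategy is the paper's. Because $T$ is $\exists_1$-axiomatized, every superstructure of a model of $T$ is again a model. So it suffices to exhibit a sentence whose truth value can be forced either way by adjoining new points. Your $\psi=\forall x\exists y\,R(x,y,\ldots,y)$ plays the role of the paper's $\Phi=\exists x\forall y\,R(x,y,\ldots,y)$; indeed $\neg\psi$ is $\Phi$ with $R$ replaced by $\neg R$.

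However, the extension that is supposed to make $\psi$ true does not do so as you describe it. The quantifier $\forall x$ in $\psi$ also ranges over the new witnesses $y_x$, and you only declare $R(x,y_x,\ldots,y_x)$ for old $x$. Suppose all other tuples involving new points are false, as in your other constructions. Then no $y$ satisfies $R(y_x,y,\ldots,y)$, so $\psi$ fails at $x=y_x$. Adding witnesses for the witnesses only pushes the problem back one step, unless you iterate through an $\omega$-chain.

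The repair is immediate. Also declare $R(y_x,y_x,\ldots,y_x)$, or more simply adjoin a single point $w$ with $R(a,w,\ldots,w)$ true for every $a\in A\cup\set{w}$ and all other tuples containing $w$ false. Only tuples containing $w$ are affected, so this is a superstructure of $\mc A$, hence a model of $T$ satisfying $\psi$. Together with your one-point extension falsifying $\psi$, this contradicts completeness. The paper's $\Phi$ needs the same care: its new point $x_0$ must also satisfy $R(x_0,x_0,\ldots,x_0)$.

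Your treatment of the ``in particular'' is correct in outline and slightly more general than the paper's. You use that, in each finite sublanguage, capped atom counts determine the theory and the quantifier-free type determines the complete type. This gives quantifier elimination for every complete theory with only unary relations. Each type is then axiomatized by $T$ (which is $\exists_1$, hence $\forall_2$) together with quantifier-free formulas, so it is bounded. The paper instead uses the $\exists_1$ hypothesis once more to see that every finite Boolean combination of the predicates is realized infinitely often, and then invokes QE. The exploratory middle paragraph of your proposal plays no role in the final argument and should be cut.
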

\begin{proof}
  If $R$ is a relation symbol with at least 2 variables, consider the formula $\Phi=\exists x \forall y\ R(x,y,\ldots,y)$. Any $\mc M\sat T$ embeds in a superstructure satisfying $\Phi$ and another satisfying $\neg\Phi$, but both have the same complete theory $T$ (since $T$ is $\exists_1$), a contradiction.

  Hence the language $\mc L$ of such $T$ has only unary relations, so $T$ must be the theory of a structure where any finite combination of the predicates has infinitely many realizations (because any $\mc L$-structure embeds into such a structure). Such theories have QE, thus admit no unbounded types.
\end{proof}

But for $\exists_2$, things remain unclear.
\begin{quest}
Is there a $\exists_2$-axiom\-atizable complete theory with unbounded types?
\end{quest}

Lastly, the theory we have is essentially still a theory of trees, and it is crucial that there are no meaningful relations between sibling nodes. While we attempted something similar for graphs, the lack of this ``level-wise independence'' makes it difficult to proceed. So can we still generalize this construction?

\begin{quest}
Is there a similar construction for other structures, like graphs?
\end{quest}

\section*{Acknowledgments}

The author would like to thank Uri Andrews for his helpful suggestions and comments, and Steffen Lempp for proofreading this paper.

\printbibliography

\end{document}